\documentclass[11pt]{article}

\title{Proof of the Arnold chord conjecture in three dimensions I}
\author{Michael Hutchings and Clifford Henry Taubes}
\date{}

\textheight 8.0in
\textwidth 5.0in

\usepackage{amssymb}
\usepackage{latexsym}
\usepackage{amsmath}
\usepackage{amsthm}
\usepackage{amscd}

\newcommand{\mc}[1]{{\mathcal #1}}

\numberwithin{equation}{section}

\newtheorem{theorem}{Theorem}[section]
\newtheorem{proposition}[theorem]{Proposition}

\newtheorem{lemma}[theorem]{Lemma}
\newtheorem{lemma-definition}[theorem]{Lemma-Definition}

\theoremstyle{definition}
\newtheorem{definition}[theorem]{Definition}
\newtheorem{remark}[theorem]{Remark}

\newtheorem{example}[theorem]{Example}
\newtheorem{convention}[theorem]{Convention}

\newtheorem*{conv}{Convention}

\newcommand{\eqdef}{\;{:=}\;}

\renewcommand{\frak}{\mathfrak}

\newcommand{\R}{{\mathbb R}}

\newcommand{\Z}{{\mathbb Z}}

\newcommand{\op}{\operatorname}

\newcommand{\Spinc}{\op{Spin}^c}

\newcommand{\Ker}{\op{Ker}}

\newcommand{\bpm}{\begin{pmatrix}}
\newcommand{\epm}{\end{pmatrix}}

\renewcommand{\epsilon}{\varepsilon}

\begin{document}

\setcounter{tocdepth}{2}

\maketitle

\begin{abstract}
  This paper and its sequel prove that every Legendrian knot in a
  closed three-manifold with a contact form has a Reeb chord.
  The present paper deduces this result from
  another theorem, asserting that an exact symplectic cobordism
  between contact 3-manifolds induces a map on (filtered) embedded
  contact homology satisfying certain axioms.  The latter theorem will
  be proved in the sequel using Seiberg-Witten theory.
\end{abstract}

\section{Introduction}

\subsection{The chord conjecture}

Let $Y$ be a closed oriented $3$-manifold (all $3$-manifolds in this
paper will be assumed connected).  Recall that a {\em contact form\/}
on $Y$ is a $1$-form $\lambda$ on $Y$ with $\lambda\wedge d\lambda>0$
everywhere.  The contact form $\lambda$ determines a {\em contact
  structure\/}, namely the oriented 2-plane field $\xi=\Ker(\lambda)$.
It also determines the {\em Reeb vector field\/} $R$ characterized by
$d\lambda(R,\cdot)=0$ and $\lambda(R)=1$.  A {\em Legendrian knot\/}
in $(Y,\lambda)$ is a smooth knot $\mc{K}\subset Y$ such that
$T\mc{K}\subset\xi|_\mc{K}$.  A {\em Reeb chord\/} of $\mc{K}$ is a
trajectory of the Reeb vector field starting and ending on $\mc{K}$,
i.e.\ a path $\gamma:[0,T]\to Y$ for some $T>0$ such that
$\gamma'(t)=R(\gamma(t))$ and $\gamma(0),\gamma(T)\in \mc{K}$.  The
main result of this paper is:

\begin{theorem}
\label{thm:cc}
Let $Y$ be a closed oriented $3$-manifold with a contact form
$\lambda$.  Then every Legendrian knot in $(Y,\lambda)$ has a Reeb
chord.
\end{theorem}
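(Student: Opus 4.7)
My plan is to argue by contradiction, reducing Theorem~\ref{thm:cc} to the cobordism-map theorem cited in the abstract. Suppose the Legendrian knot $\mc{K} \subset (Y, \lambda)$ has no Reeb chord. Then every positive-time Reeb trajectory starting on $\mc{K}$ stays disjoint from $\mc{K}$; combined with the Legendrian neighborhood theorem, this gives a tubular neighborhood $N$ of $\mc{K}$ modeled on $(\reals/L\Z) \times D^2$ with contact form conjugate to $dz - y\,dx$, in which $\mc{K} = \{y=z=0\}$ and the Reeb vector field is $\partial_z$. The no-chord hypothesis lets us extend this model along the Reeb direction into a longer flow-box, which is the arena for the perturbation below.

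Next I would perturb $\lambda$ to a contact form $\lambda' = f\lambda$ that agrees with $\lambda$ outside a neighborhood of $\mc{K}$ but inside is engineered to introduce a short elliptic--hyperbolic pair of Reeb orbits on a pre-Lagrangian torus parallel to $\mc{K}$, each of action at most $\epsilon$ for arbitrarily small $\epsilon > 0$. The absence of Reeb chords is what allows such a localized modification to avoid producing any spurious new generators of filtered embedded contact homology of action below $\epsilon$ besides this pair, and in particular lets us control the low-action dynamics completely.

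The final step is to interpolate between $\lambda$ and $\lambda'$ to build exact symplectic cobordisms, and to apply the cobordism-map theorem of the abstract to obtain maps on filtered embedded contact homology. The axioms satisfied by these maps---naturality under composition, compatibility with the ECH contact class, and compatibility with the $U$-map---force a rigid algebraic pattern. The new short orbits introduced in the previous step make the filtered ECH of $\lambda'$ behave in a manner incompatible with what these axioms require, producing a contradiction and hence the theorem.

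The main obstacle is the middle step: constructing a modification that genuinely exploits the no-chord hypothesis, that yields a change in filtered embedded contact homology visible to the cobordism-map axioms, and that introduces no unexpected Reeb chords or additional low-action orbits. Precisely tracking the generators of filtered ECH for $\lambda'$ below action $\epsilon$, and then identifying the right combination of cobordism maps whose composition the axioms pin down sharply enough to yield the contradiction, is where the essential work lies; once the modification is in place, the cobordism argument is a formal consequence of the theorem proved in the sequel.
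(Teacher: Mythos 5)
Your proposal diverges from the paper in a way that opens a genuine gap: you perturb the contact form on the \emph{same} three-manifold $Y$, whereas the paper performs Legendrian surgery on $\mc{K}$ (with framing $tb(\mc{K})-1$) to produce a \emph{topologically different} manifold $Y_1$ and an exact symplectic cobordism $X$ from $(Y_1,\lambda_1)$ to $(Y,\lambda)$. This distinction is not cosmetic. If $\lambda'$ is obtained from $\lambda$ by a compactly supported modification near $\mc{K}$, any exact cobordism you build by interpolating between them is a product cobordism $[0,1]\times Y$; by the Homotopy Invariance and Composition axioms (and the corresponding fact for $\widehat{HM}^*$), the induced map on full ECH is an isomorphism, and there is no algebraic obstruction in sight. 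Filtered ECH can of course change under such a perturbation, and you can indeed create low-action elliptic--hyperbolic pairs, but that does not by itself conflict with any of the cobordism axioms: the map $\Phi^L$ is not required to be an isomorphism at finite $L$, and the low-action orbit pair you introduce typically cancels in homology. You acknowledge the middle step is the obstacle, but the difficulty is structural, not merely technical---you cannot manufacture a contradiction purely from within a single symplectization.

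The paper's argument hinges on two facts that your plan does not reach. First, by the Kronheimer--Mrowka surgery exact triangle (\cite[Thm.\ 42.2.1]{km}), the map $\widehat{HM}^*(X):\widehat{HM}^*(Y_1)\to\widehat{HM}^*(Y)$ induced by a surgery cobordism is \emph{never} an isomorphism: if it were, the third term $\widehat{HM}^*(Y_2)$ of the triangle would vanish, contradicting the fact (\cite[Cor.\ 35.1.4]{km}) that $\widehat{HM}^*$ is infinitely generated in every torsion spin-c structure. Second, the no-Reeb-chord hypothesis is used to show that, after rescaling (Proposition~\ref{prop:ls}, Lemma~\ref{lem:ro}), the Reeb orbits of $\lambda_1$ of action below any prescribed $L$ correspond bijectively to those of $\lambda$, and the Holomorphic Curves axiom (via the product-cylinder mechanism) then forces the filtered cobordism chain map to be a triangular isomorphism. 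Combining these (and passing to the degenerate case by a compactness argument as in \S\ref{sec:ccdegenerate}) gives the contradiction. Your sketch has the right flavor in wanting to exploit the structure of low-action filtered ECH and the cobordism axioms, but without a surgery cobordism (or some other map on $\widehat{HM}^*$ that is provably not an isomorphism) the final contradiction does not materialize. Also, the axioms you cite---compatibility with the $U$-map and with the ECH contact class---are not among those actually provided by Theorem~\ref{thm:cob}; the axioms that do the work are Homotopy Invariance, Inclusion, Direct Limit, Composition, and Holomorphic Curves.
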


This is a version of a conjecture of Arnold \cite{arnold}.  For the
3-sphere with any contact form inducing the standard contact
structure, and more generally for boundaries of subcritical Stein
manifolds in any odd dimension, this was proved by Mohnke
\cite{mohnke}.  This was also proved by Abbas \cite{abbas} for
Legendrian unknots in tight contact 3-manifolds satisfying certain
assumptions.

The proof of Theorem~\ref{thm:cc} given here uses the relationship
between embedded contact homology and Seiberg-Witten Floer cohomology.
We now recall the relevant parts of this correspondence.

\subsection{Embedded contact homology}
\label{sec:ech}

We begin by briefly reviewing the definition of embedded contact
homology.  For more details see \cite{icm} and the references therein.

Let $Y$ be a closed oriented 3-manifold with a contact form $\lambda$.
A {\em Reeb orbit\/} is a closed orbit of the Reeb vector field, i.e.\
a map $\gamma:\R/T\Z\to Y$ for some $T>0$ with
$\gamma'(t)=R(\gamma(t))$, modulo reparametrization. The linearized
Reeb flow along a Reeb orbit $\gamma$ defines an endomorphism
$P_\gamma$ of the 2-dimensional symplectic vector space
$(\xi_{\gamma(0)},d\lambda)$. A Reeb orbit $\gamma$ is {\em
  nondegenerate\/} if $P_\gamma$ does not have $1$ as an
eigenvalue. In this case either $P_\gamma$ has real eigenvalues, in
which case we say that $\gamma$ is {\em hyperbolic\/}, or $P_\gamma$
has eigenvalues on the unit circle, in which case $\gamma$ is called
{\em elliptic\/}. These notions do not depend on the parametrization
of $\gamma$. We say that the contact form $\lambda$ is {\em
  nondegenerate\/} if all its Reeb orbits are nondegenerate. A generic
contact form has this property.

Assume now that the contact form $\lambda$ on $Y$ is nondegenerate. An
{\em orbit set\/} is a finite set of pairs $\Theta=\{(\Theta_i,m_i)\}$
where the $\Theta_i$'s are distinct embedded Reeb orbits, and the
$m_i$'s are positive integers which one can think of as
``multiplicities''.  The homology class of the orbit set $\Theta$ is
defined by
\[
[\Theta] \eqdef \sum_i m_i[\Theta_i] \in H_1(Y).
\]
The orbit set $\Theta=\{(\Theta_i,m_i)\}$ is called {\em
  admissible\/} if $m_i=1$ whenever $\Theta_i$ is hyperbolic. An
admissible orbit set is also called an {\em
  ECH generator\/}.

If $\Gamma\in H_1(Y)$, then the {\em embedded
  contact homology\/} $ECH_*(Y,\lambda,\Gamma)$ is
the homology of a chain complex which is freely generated over $\Z/2$
by admissible orbit sets $\Theta$ with $[\Theta]=\Gamma$.

\begin{conv}
  Although ECH is ordinarily defined over $\Z$, with the signs specified
  in \cite[\S9]{obg2}, in this paper ECH is always defined with $\Z/2$
  coefficients, because this is sufficient for the applications here
  and will allow us to avoid orientation headaches.
\end{conv}

To define the chain complex differential
one chooses a generic almost complex structure $J$ on $\R\times Y$ of
the following type:

\begin{definition}
\label{def:SA}
An almost complex structure $J$ on $\R\times Y$ is {\em
  symplectization-admissible\/} if $J$ is $\R$-invariant,
$J(\partial_s)=R$ where $s$ denotes the $\R$ coordinate, and $J$ sends
$\xi$ to itself, rotating $\xi$ positively with respect to the
orientation on $\xi$ given by $d\lambda$.
\end{definition}

The reason for the terminology is that the noncompact symplectic
manifold $(\R\times Y,d(e^s\lambda))$ is called the {\em
  symplectization\/} of $(Y,\lambda)$.

Given a symplectization-admissible $J$, we now consider (not
necessarily embedded) $J$-holomorphic curves in $\R\times Y$ whose
domains are (not necessarily connected) punctured compact Riemann
surfaces.  A {\em positive end\/} of such a holomorphic curve at a
(not necessarily embedded) Reeb orbit $\gamma$ is an end which is
asymptotic to the cylinder $\R\times\gamma$ as the $\R$ coordinate
$s\to +\infty$.  A {\em negative end\/} is defined analogously with
$s\to-\infty$.

\begin{definition}
\label{def:Jhol}
Given a symplectization-admissible $J$, and given (not necessarily
admissible) orbit sets $\Theta=\{(\Theta_i,m_i)\}$ and
$\Theta'=\{(\Theta'_j,m'_j)\}$, a ``$J$-holomorphic curve from
$\Theta$ to $\Theta'$'' is a $J$-holomorphic curve in $\R\times Y$
as above with positive ends at
covers of $\Theta_i$ with total multiplicity $m_i$, negative ends at
covers of $\Theta'_j$ with total multiplicity $m'_j$, and no other
ends.

Such a holomorphic curve may be multiply covered, but we are only
interested in the corresponding current.  In particular, let
$\mc{M}^J(\Theta,\Theta')$ denote the moduli space of $J$-holomorphic
curves from $\Theta$ to $\Theta'$, where two such curves are
considered equivalent if they represent the same current in $\R\times
Y$, up to translation of the $\R$ coordinate.
\end{definition}

Given ECH generators $\Theta$ and $\Theta'$ with
$[\Theta]=[\Theta']=\Gamma$, the differential coefficient
$\langle\partial\Theta,\Theta'\rangle\in \Z/2$ is the mod 2 count of
$J$-holomorphic curves in $\mc{M}^J(\Theta,\Theta')$ with ``ECH
index'' equal to $1$.  The definition of the ECH index is not needed
in this paper and may be found in \cite{ir}.  If $J$ is generic,
then $\partial$ is well-defined and $\partial^2=0$, as shown in
\cite[\S7]{obg1}.  In this case we denote the chain complex by
$ECC_*(Y,\lambda,\Gamma;J)$. A symplectization-admissible almost
complex structure that is generic in this sense will be called {\em
  ECH-generic\/} here.  It turns out that the curves counted by the
ECH differential are embedded, except that they may include multiple
covers of $\R$-invariant cylinders.  The ECH index defines a relative
$\Z/d(c_1(\xi)+2\op{PD}(\Gamma))$ grading on the chain complex, where
$d$ denotes divisibility in $H^2(Y;\Z)/\op{Torsion}$.

\subsection{The isomorphism with Seiberg-Witten Floer cohomology}
\label{sec:isoswf}

Although the ECH differential depends on the choice of $J$, the
homology of the chain complex does not.  This follows from a much
stronger theorem of the second author \cite{e1,e2,e3,e4} asserting
that ECH is isomorphic to a version of Seiberg-Witten Floer cohomology
as defined by Kronheimer-Mrowka.  To be precise, there are three basic
versions of Seiberg-Witten Floer cohomology, denoted by
$\widehat{HM}^*$, $\check{HM}^*$, and $\overline{HM}^*$.  The first of
these is the one that is relevant to ECH; it assigns $\Z/2$-modules
$\widehat{HM}^*(Y,\frak{s})$ to each spin-c structure $\frak{s}$ on
$Y$, which have a relative $\Z/d(c_1(\frak{s}))$-grading. 

\begin{convention}
In this paper, all Seiberg-Witten Floer cohomology is defined with
$\Z/2$ coefficients (even though it can be defined over $\Z$, which is
the default coefficient system in \cite{km}).
\end{convention}

Recall that the set $\Spinc(Y)$ of spin-c structures on $Y$ is an
affine space over $H^2(Y;\Z)$, and the contact structure $\xi$
determines a distinguished spin-c structure $\frak{s}_\xi$.  With this
convention, the theorem is now that
\begin{equation}
\label{eqn:echswf}
ECH_*(Y,\lambda,\Gamma) \simeq \widehat{HM}^{-*}(Y,\frak{s}_\xi +
\op{PD}(\Gamma)),
\end{equation}
as relatively graded $\Z/2$-modules. (There is also an isomorphism
with $\Z$ coefficients \cite{e3}.)

It follows from scrutiny of the proof of \eqref{eqn:echswf}, together
with the invariance properties of $\widehat{HM}^*$, that the versions
of $ECH_*(Y,\lambda,\Gamma)$ defined using different almost complex
structures $J$ are canonically isomorphic to each other.  This point
is explained in detail in \cite{cc2}.  Thus it makes sense to talk
about $ECH_*(Y,\lambda,\Gamma)$ without referring to a choice of $J$.
Moreover, under this identification, the isomorphism
\eqref{eqn:echswf} is canonical.

At times it is convenient to ignore the homology class $\Gamma$ in the
definition of $ECH$, and simply define
\[
ECH_*(Y,\lambda) \eqdef \bigoplus_{\Gamma\in
  H_1(Y)}ECH_*(Y,\lambda,\Gamma).
\]
This is the homology of a chain complex $ECC_*(Y,\lambda;J)$ generated
by all admissible orbit sets, and by \eqref{eqn:echswf} this homology
is canonically isomorphic (as a relatively graded $\Z/2$-module) to
\[
\widehat{HM}^{-*}(Y)\eqdef
\bigoplus_{\frak{s}\in\Spinc(Y)}\widehat{HM}^{-*}(Y,\frak{s}).
\]

The proof of Theorem~\ref{thm:cc} makes use of two additional
structures on ECH: the action filtration and cobordism maps.  We now
explain these.

\subsection{The action filtration}
\label{sec:filtration}

If $\Theta=\{(\Theta_i,m_i)\}$ is an orbit set, its {\em symplectic
  action\/} or {\em length\/} is defined by
\begin{equation}
\label{eqn:length}
\mc{A}(\Theta) \eqdef \sum_i m_i \int_{\Theta_i}\gamma.
\end{equation}
The ECH differential for any (generic) symplectization-admissible $J$
decreases the action, i.e.\ if
$\langle\partial\Theta,\Theta'\rangle\neq 0$ then
$\mc{A}(\Theta)\ge\mc{A}(\Theta')$.  This is because if
$C\in\mc{M}^J(\Theta,\Theta')$, then $d\lambda|_C\ge 0$
everywhere\footnote{In fact if
  $\langle\partial\Theta,\Theta'\rangle\neq 0$ then the strict
  inequality $\mc{A}(\Theta)>\mc{A}(\Theta')$ holds, because
  $d\lambda$ vanishes identically on $C$ if and only if the image of
  $C$ is $\R$-invariant, in which case $C$ has ECH index zero and
  cannot contribute to the differential.}.  Thus for any real number
$L$, it makes sense to define the {\em filtered ECH\/}, denoted by $
ECH_*^{L}(Y,\lambda) $, to be the homology of the subcomplex
$ECC_*^L(Y,\lambda;J)$ of the ECH chain complex spanned by ECH
generators with action less than $L$.  It is shown in \cite{cc2} that
$ECH_*^{L}(Y,\lambda)$, just like $ECH_*(Y,\lambda)$, does not depend
on the choice of ECH-generic $J$.  However $ECH_*^L(Y,\lambda)$,
unlike the usual ECH, can change when one deforms the contact form
$\lambda$.

For $L<L'$ there is a map
\begin{equation}
\label{eqn:istar}
\imath^{L,L'}:ECH_*^{L}(Y,\lambda) \longrightarrow
ECH_*^{L'}(Y,\lambda)
\end{equation}
induced by the inclusion of chain complexes (for some given $J$,
although it is shown in \cite{cc2} that
\eqref{eqn:istar} does not depend on $J$).
The usual ECH is recovered as the direct limit
\begin{equation}
\label{eqn:edr}
ECH_*(Y,\lambda) = \lim_{L\to\infty}ECH_*^{L}(Y,\lambda).
\end{equation}
In particular, there is a natural map
\begin{equation}
\label{eqn:iL}
\imath^L:ECH_*^L(Y,\lambda) \longrightarrow ECH_*(Y,\lambda),
\end{equation}
again induced by an inclusion of chain complexes.

\subsection{Cobordism maps in ECH}
\label{sec:cobech}

Let $(Y_+,\lambda_+)$ and $(Y_-,\lambda_-)$ be closed oriented
3-manifolds with nondegenerate contact forms.  An {\em exact
  symplectic cobordism\/} from $(Y_+,\lambda_+)$ to $(Y_-,\lambda_-)$
is a compact symplectic 4-manifold $(X,\omega)$ with boundary
$\partial X = Y_+-Y_-$, for which there exists a $1$-form $\lambda$ on
$X$ such that $d\lambda=\omega$ on $X$ and
$\lambda|_{Y_\pm}=\lambda_\pm$.  A $1$-form $\lambda$ as above is
called a {\em Liouville form\/} for $(X,\omega)$. When we wish to
specify a Liouville form (which we usually do), we denote the exact
symplectic cobordism by $(X,\lambda)$, and we continue to write
$\omega=d\lambda$.

Note that our designation of the cobordism as ``from $Y_+$ to $Y_-$''
is natural from the perspective of symplectic geometry, but opposite
from the usual convention in Seiberg-Witten and Heegaard Floer
homology.  This is connected with the fact that embedded contact {\em
  homology\/} is identified with Seiberg-Witten Floer {\em cohomology\/}.

Now let $(X,\lambda)$ be an exact symplectic cobordism as
above.  This cobordism, like any smooth cobordism, induces
a map\footnote{Kronheimer-Mrowka define this map on the
  ``completed'' Seiberg-Witten Floer cohomology
  $\widehat{HM}^{\bullet}$.  However for $\widehat{HM}$, the completed
  and uncompleted cohomologies are the same.  Completion only makes a
  difference for the alternate versions $\check{HM}^*$ and
  $\overline{HM}^*$ of Seiberg-Witten Floer cohomology.

  Note also that if one uses coefficients in $\Z$ instead of $\Z/2$,
  then the signs in the cobordism map on $\widehat{HM}$ depend on a
  choice of ``homology orientation'' of $X$.  However one expects to
  be able to define cobordism maps on $ECH$ over $\Z$ without making
  such a choice, cf.\ \cite[Lem.\ A.14]{lw}.  Presumably an exact
  symplectic cobordism has a canonical homology orientation which
  makes the signs agree.}  of ungraded $\Z/2$-modules from the
Seiberg-Witten Floer cohomology of $Y_+$ to that of $Y_-$, which we
denote by
\begin{equation}
\label{eqn:swcob}
\widehat{HM}^*(X): \widehat{HM}^{*}(Y_+) \longrightarrow \widehat{HM}^{*}(Y_-).
\end{equation}

\begin{definition}
\label{def:PhiX}
Define
\begin{equation}
\label{eqn:gencob}
\Phi(X):
ECH_*(Y_+,\lambda_+) \longrightarrow ECH_*(Y_-,\lambda_-)
\end{equation}
to be the map on ECH obtained by composing the map \eqref{eqn:swcob}
on Seiberg-Witten Floer cohomology with the canonical isomorphism
\eqref{eqn:echswf} on both sides.
\end{definition}

It is natural to expect that the map \eqref{eqn:gencob} can be defined
directly, without using Seiberg-Witten theory, by suitably counting
holomorphic curves in the ``completion'' of $(X,\lambda)$.  The latter
is a noncompact symplectic manifold defined as follows.  To start, one
can find $\varepsilon>0$, a neighborhood $N_-$ of $Y_-$ in $X$, and an
identification $N_-\simeq [0,\varepsilon)\times Y_-$, such that
$\lambda=e^s\lambda_-$ on $N_-$, where $s$ denotes the
$[0,\varepsilon)$ coordinate.  The requisite map
$[0,\varepsilon)\times Y_-\to X$ is obtained using the flow starting
at $Y_-$ of the unique vector field $V$ on $X$ such that
$\imath_V\omega=\lambda$.  Likewise, a neighborhood $N_+$ of $Y_+$ in
$X$ can be identified with $(-\varepsilon,0]\times Y_+$ so that
$\lambda=e^s\lambda_+$ on $N_+$.  Using these identifications, one can
then glue symplectization ends to $X$ to obtain the {\em completion\/}
\begin{equation}
\label{eqn:completion}
\overline{X} \eqdef ((-\infty,0]\times Y_-) \cup_{Y_-} X \cup_{Y_+}
([0,\infty)\times Y_+).
\end{equation}
Note for reference later that the Liouville form $\lambda$ on $X$
canonically extends to a $1$-form on $\overline{X}$ which equals
$e^s\lambda_\pm$ on the ends.

\begin{definition}
\label{def:cobadm}
An almost complex structure $J$ on $\overline{X}$ is {\em
  cobordism-admissible\/} if it is
$\omega$-compatible\footnote{Everything we describe below should still
  be possible if one weakens the $\omega$-compatible condition here to
  $\omega$-tame.  However, because the papers relating Seiberg-Witten
  Floer cohomology to ECH use compatible almost complex structures, we
  will stick with the latter to avoid confusion.} on $X$, and if it
agrees with symplectization-admissible almost complex structures $J_+$
for $\lambda_+$ on $[0,\infty)\times Y_+$ and $J_-$ for $\lambda_-$ on
$(-\infty,0]\times Y_-$.
\end{definition}

Given a cobordism-admissible $J$, and given (not necessarily
admissible) orbit sets $\Theta^+=\{(\Theta_i^+,m_i^+)\}$ in $Y_+$ and
$\Theta^-=\{(\Theta_j^-,m_i^-)\}$ in $Y_-$, we define a
``$J$-holomorphic curve in $\overline{X}$ from $\Theta^+$ to
$\Theta^-$'' analogously to Definition~\ref{def:Jhol}, and denote the
moduli space of such curves by $\mc{M}^J(\Theta^+,\Theta^-)$, where
two such curves are considered equivalent if they represent the same
current in $\overline{X}$.

One would now like to define the map \eqref{eqn:gencob} by choosing a
generic cobordism-admissible $J$ and suitably counting $J$-holomorphic
curves in $\overline{X}$ as above with ECH index 0, so as to define a
chain map between the ECH chain complexes which induces the map
\eqref{eqn:gencob} on homology.  (In general one also needs to include
contributions from ``broken'' $J$-holomorphic curves, see
\S\ref{sec:exact}.)  An important consequence of such a construction
would be that the map \eqref{eqn:gencob} respects the action
filtrations, i.e.\ is induced by a chain map which does not increase
the action filtration.  The reason is that if $C$ is any holomorphic
curve in $\mc{M}^J(\Theta^+,\Theta^-)$, then by Stokes' theorem and
the exactness of the cobordism we have
\begin{equation}
\label{eqn:stokes}
\mc{A}(\Theta^+)-\mc{A}(\Theta^-) = \int_{C\cap [0,\infty)\times
  Y_+}d\lambda_+ + \int_{C\cap X}\omega + \int_{C\cap
  (-\infty,0]\times Y_-}d\lambda_-,
\end{equation}
and all of the integrands on the right hand side are pointwise
nonnegative by our assumptions on $J$.

Unfortunately it is not currently known how to define the map
\eqref{eqn:gencob} in terms of holomorphic curves as above.  The
difficulty is that, as explained in \cite[\S5]{ir}, the
compactifications of the relevant moduli spaces of holomorphic curves
can include broken curves with negative index multiply covered
components, and it is not clear in general what these should
contribute to the count (although examples show that such broken
curves must sometimes make nonzero contributions).  However we can
still use Seiberg-Witten theory to show that the map
\eqref{eqn:gencob} respects the action filtrations (in a slightly
weaker sense than above), and enjoys some other useful properties
which would follow from a definition in terms of holomorphic curves.
The precise statement uses filtered ECH and is given in
Theorem~\ref{thm:cob} below.

The basic idea of the proof of Theorem~\ref{thm:cob} is to perturb the
Seiberg-Witten equations on $\overline{X}$ using a large multiple of
the symplectic form, much as in the proof of \eqref{eqn:echswf}, and
to show that with such a perturbation, Seiberg-Witten solutions that
contribute to the cobordism map \eqref{eqn:gencob} give rise to
(possibly broken) holomorphic curves.  The main analytical machinery
is adapted from the proof of \eqref{eqn:echswf} in \cite{e1,e4}.
Nonetheless the detailed proof is still long, so we have deferred it
to the sequel \cite{cc2}.

\subsection{Legendrian surgery}

Returning finally to the chord conjecture, let $(Y_0,\lambda_0)$ be a
closed oriented 3-manifold with a contact form, and let $\mc{K}$ be a
Legendrian knot in $(Y_0,\lambda_0)$.  The contact structure
determines a framing of $\mc{K}$, which we denote by $tb(\mc{K})$.
Let $Y_1$ denote the 3-manifold obtained by surgery on $\mc{K}$ with
framing $tb(\mc{K})-1$.  The surgery procedure determines a smooth
cobordism $X$ from $Y_1$ to $Y_0$.  As was shown in \cite{weinstein}
and as we review in \S\ref{sec:ls}, the 3-manifold $Y_1$ has a natural
contact structure, which can be expressed as the kernel of a contact
form $\lambda_1$ such that $X$ has the structure of an exact
symplectic cobordism from $(Y_1,\lambda_1)$ to $(Y_0,\lambda_0)$.
Moreover, as is familiar from the work of Bourgeois-Ekholm-Eliashberg
\cite{bee} on Legendrian surgery in contact homology, the contact form
$\lambda_1$ can be chosen so that, modulo ``long'' Reeb orbits, one
has:
\begin{description}
\item{(*)}
The Reeb orbits of $\lambda_1$ correspond to the Reeb orbits of
$\lambda_0$, together with cyclic words in the Reeb chords of
$\mc{K}$.
\end{description}
In particular, if $\mc{K}$ has no Reeb chord, then
$\lambda_1$ and $\lambda_0$ have the same ``short'' Reeb orbits.

The idea of the proof of the chord conjecture is to use the preceding
observation, together with Theorem~\ref{thm:cob} regarding the
properties of ECH cobordism maps, to show that if there is no Reeb
chord then the ECH cobordism map
\begin{equation}
\label{eqn:surcob}
\Phi(X): ECH_*(Y_1,\lambda_1) \longrightarrow ECH_*(Y_0,\lambda_0)
\end{equation}
induced by the Legendrian surgery cobordism is an isomorphism.  Note
that this is what one would expect by analogy with a very special case
of the aforementioned work of Bourgeois-Ekholm-Eliashberg.

But the map \eqref{eqn:surcob} cannot be an isomorphism, because this
would contradict results of Kronheimer-Mrowka, namely:

\begin{lemma}
\label{lem:ni}
If $Y_1$ is obtained from a closed oriented 3-manifold $Y_0$ by
surgery along a knot $\mc{K}$, and if $X$ denotes the corresponding
smooth cobordism from $Y_1$ to $Y_0$, then the induced map on
Seiberg-Witten Floer cohomology with $\Z/2$ coefficients,
\begin{equation}
\label{eqn:ni}
\widehat{HM}^*(X):\widehat{HM}^*(Y_1) \longrightarrow
\widehat{HM}^*(Y_0),
\end{equation}
is not an isomorphism.
\end{lemma}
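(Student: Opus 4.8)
The plan is to derive this from the nontriviality properties of $\widehat{HM}$ under the $U$-map and the exact triangle of Kronheimer-Mrowka, together with the fact that $\widehat{HM}^*(Y)$ is nonzero for every closed oriented $3$-manifold. First I would recall that surgery along a knot $\mc{K}$ fits into a surgery exact triangle relating $Y_0$, $Y_1$, and a third manifold $Y_\infty$ obtained by the ``third'' surgery coefficient; on $\widehat{HM}$ this gives a long exact sequence in which one of the three maps is (up to the usual identifications) the cobordism map \eqref{eqn:ni}. If \eqref{eqn:ni} were an isomorphism, exactness would force $\widehat{HM}^*(Y_\infty)=0$. But $\widehat{HM}^*(Y)\neq 0$ for all closed oriented $3$-manifolds $Y$ — this is a fundamental nonvanishing result of Kronheimer-Mrowka (equivalently, the analogous statement for monopole Floer homology $\widehat{HM}_\bullet$), which can be seen from the fact that $\widehat{HM}$ surjects onto $H_*(\op{pt})$-type information via the $\overline{HM}$ comparison, or from the isomorphism with Heegaard Floer $\widehat{HF}$, which is nonzero because its Euler characteristic-type invariants detect the ``top'' class. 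This contradiction shows \eqref{eqn:ni} cannot be an isomorphism.

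The main technical point to pin down is exactly which arrow in the Kronheimer-Mrowka surgery exact triangle is induced by the cobordism $X$ of the statement, and with which spin-c structures / gradings. I would set this up following \cite[\S42]{km}: the triangle is built from three cobordisms $W_0 : Y_\infty \to Y_0$, $W_1: Y_0 \to Y_1$, $W_2 : Y_1 \to Y_\infty$, each obtained by attaching a single two-handle, arranged so that consecutive composites contain a sphere of self-intersection $-1$ and hence induce zero. Our $X$ is (up to orientation reversal, bearing in mind the homology/cohomology convention noted in the excerpt) one of these handle-attachment cobordisms — say $W_1$ after relabeling — so $\widehat{HM}^*(X)$ is the corresponding map in the exact sequence $\cdots \to \widehat{HM}^*(Y_\infty)\to\widehat{HM}^*(Y_0)\xrightarrow{\widehat{HM}^*(X)}\widehat{HM}^*(Y_1)\to\cdots$. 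The only subtlety is matching the direction of $X$ (``from $Y_1$ to $Y_0$'' in the contact-geometry sense) to Kronheimer-Mrowka's convention; this is precisely the variance issue flagged in the excerpt, and it reverses which of the two adjacent Floer groups plays the role of ``the third manifold'' but does not affect the conclusion.

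With the triangle in hand the argument is immediate: surjectivity of $\widehat{HM}^*(X)$ combined with exactness at $\widehat{HM}^*(Y_1)$ gives that the next map $\widehat{HM}^*(Y_1)\to\widehat{HM}^*(Y_\infty)$ is zero, and injectivity of $\widehat{HM}^*(X)$ combined with exactness at $\widehat{HM}^*(Y_0)$ gives that $\widehat{HM}^*(Y_\infty)\to\widehat{HM}^*(Y_0)$ is zero; exactness at $\widehat{HM}^*(Y_\infty)$ then forces $\widehat{HM}^*(Y_\infty)=0$, contradicting nonvanishing. I expect the nonvanishing input to be quotable directly — it is \cite[Cor.\ 35.1.4 and Prop.\ 36.1.3]{km} or can be obtained from $\widehat{HM}\cong\widehat{HF}\neq 0$ — so the only real work is the bookkeeping of orientations and of which surgery coefficient produces $Y_\infty$; everything else is formal diagram-chasing. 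The main obstacle, such as it is, is therefore purely organizational: correctly invoking the exact triangle with the contact-geometry orientation conventions of this paper rather than Kronheimer-Mrowka's.
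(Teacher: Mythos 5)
Your proposal is correct and follows essentially the same route as the paper: invoke the Kronheimer--Mrowka surgery exact triangle from \cite[\S 42]{km} and derive a contradiction with the nonvanishing of $\widehat{HM}^*$ of the third surgery, with the nonvanishing input being exactly \cite[Cor.\ 35.1.4]{km} (that $\widehat{HM}^*(Y,\frak{s})$ is infinitely generated for any torsion spin-c structure). The paper's proof is shorter only because it does not pause to discuss the orientation/convention bookkeeping or alternate routes to nonvanishing, but the substance is the same.
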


\begin{proof}
  It follows from \cite[Thm.\ 42.2.1]{km}, see also \cite{bloom,kmos},
  that there is an exact triangle
\[
\cdots
\longrightarrow
\widehat{HM}^*(Y_2) \longrightarrow \widehat{HM}^*(Y_1)
\stackrel{\widehat{HM}^*(X)}{\longrightarrow} \widehat{HM}^*(Y_0)
\longrightarrow \widehat{HM}^*(Y_2) \longrightarrow \cdots
\]
where $Y_2$ is obtained from $Y_0$ by a certain different surgery
along $\mc{K}$.  Note that the exact triangle was only proved over
$\Z/2$, so it is fortunate that we are using $\Z/2$ coefficients
everywhere.  Now the exact triangle implies that if \eqref{eqn:ni}
were an isomorphism, then $\widehat{HM}^*(Y_2)$ would vanish.  However
the latter is nontrivial, because it follows from \cite[Cor.\
35.1.4]{km} that for any 3-manifold $Y$, if $\frak{s}$ is a torsion
spin-c structure on $Y$ (these always exist), then
$\widehat{HM}^*(Y,\frak{s})$ is infinitely generated.  This is proved
in \cite{km} with $\Z$ coefficients, which immediately implies the
statement with $\Z/2$ coefficients.
\end{proof}

There are two wrinkles in the above argument.  First, statement (*) is
true only for Reeb orbits whose action is not too large, where the
definition of ``large'' depends on the details of the Legendrian
surgery construction.  However one can modify $\lambda_1$ so as to
make the corresponding upper bound on the action arbitrary large, see
Lemma~\ref{lem:ro} below.  Moreover the different versions of
$(Y_1,\lambda_1)$ fit into a sequence of exact cobordisms. As a
result, by making appropriate use of the cobordism maps on filtered
ECH, we can still show that if there is no Reeb chord then the ECH
cobordism map \eqref{eqn:surcob} induced by the Legendrian surgery is
an isomorphism.

Second, the above argument only makes sense if the contact form
$\lambda_0$ (and with it the contact form $\lambda_1$) is
nondegenerate, so that its ECH chain complex is well-defined.  A
priori there could exist a degenerate contact form and a Legendrian
knot with no Reeb chord, such that for any nondegenerate perturbation
of the contact form the knot does have a Reeb chord.  To deal with
this issue, we will show that when $\lambda_0$ is nondegenerate, there
exists a Reeb chord with an upper bound on its symplectic action,
given by a quantitative measure of the failure of the cobordism map
\eqref{eqn:surcob} to be an isomorphism.  The precise statement is
given in Theorem~\ref{thm:nondegenerate} below.  The aforementioned
upper bound depends ``continuously'' on the contact form, as shown in
Proposition~\ref{prop:continuity}.  It then follows from a compactness
argument that the chord conjecture holds in the degenerate case as
well.

\paragraph{Contents of the rest of the paper.}
In \S\ref{sec:exact} we give the precise statement of
Theorem~\ref{thm:cob} on the existence and properties of maps on
(filtered) ECH induced by exact symplectic cobordisms.  In
\S\ref{sec:ls}--\S\ref{sec:ccdegenerate} we use Theorem~\ref{thm:cob}
as a ``black box'' to prove the chord conjecture.  The formal proof of
the chord conjecture is put together at the end of
\S\ref{sec:ccdegenerate}.  In the sequel \cite{cc2} we use
Seiberg-Witten theory to prove Theorem~\ref{thm:cob}.

\paragraph{Acknowledgments.} We thank Jonathan Bloom, Tobias Ekholm,
Yasha Eliashberg, Ko Honda, Dusa McDuff, Tomasz Mrowka, and Ivan Smith
for helpful discussions.  The first author was partially supported by
NSF grant DMS-0806037.  The second author was partially supported by
the Clay Mathematics Insitute, the Mathematical Sciences Research
Institute, and the NSF.  Both authors thank MSRI, where this work was
carried out, for its hospitality.

\section{ECH and exact symplectic cobordisms}
\label{sec:exact}

We now state the theorem on the existence and properties of maps on (filtered)
ECH induced by exact symplectic cobordisms.

We need some preliminary definitions.  Below, let $(X,\lambda)$ be an
exact symplectic cobordism from $(Y_+,\lambda_+)$ to
$(Y_-,\lambda_-)$, and assume that the contact forms $\lambda_\pm$ are
nondegenerate.  Fix a cobordism-admissible almost complex structure
$J$ on $\overline{X}$ which restricts to symplectization-admissible
almost complex structures $J_+$ on $[0,\infty)\times Y_+$ and $J_-$ on
$(-\infty,0]\times Y_-$, as in Definition~\ref{def:cobadm}.

\paragraph{Broken curves.}

Let $\Theta^+$ and $\Theta^-$ be (not
necessarily admissible) orbit sets in $(Y_+,\lambda_+)$ and
$(Y_-,\lambda_-)$ respectively.

\begin{definition}
\label{def:broken}
A {\em broken $J$-holomorphic curve from $\Theta^+$ to $\Theta^-$\/}
is a collection of holomorphic curves $\{C_k\}_{1\le k\le N}$, and
(not necessarily admissible) orbit sets $\Theta^{k+}$ and
$\Theta^{k-}$ for each $k$, such that there exists
$k_0\in\{1,\ldots,N\}$ such that:
\begin{itemize}
\item
$\Theta^{k+}$ is an orbit set in $(Y_+,\lambda_+)$ for each $k\ge k_0$;
$\Theta^{k-}$ is an orbit set in $(Y_-,\lambda_-)$ for each $k\le k_0$; 
$\Theta^{N+}=\Theta^+$; $\Theta^{1-}=\Theta^-$; and
$\Theta^{k-}=\Theta^{k-1,+}$ for each $k>1$.
\item
If $k>k_0$ then $C_k\in\mc{M}^{J_+}(\Theta^{k+},\Theta^{k-})$;
$C_{k_0}\in\mc{M}^J(\Theta^{k_0,+},\Theta^{k_0,-})$; 
and if $k<k_0$ then $C_k\in\mc{M}^{J_-}(\Theta^{k+},\Theta^{k-})$.
\item
If $k\neq k_0$ then $C_k$ is not $\R$-invariant (as a current).
\end{itemize}
Let $\overline{\mc{M}^J(\Theta^+,\Theta^-)}$ denote the moduli space
of broken $J$-holomorphic curves from $\Theta^+$ to $\Theta^-$ as above.
\end{definition}

Note that $\mc{M}^J(\Theta^+,\Theta^-)$ is a subset of
$\overline{\mc{M}^J(\Theta^+,\Theta^-)}$ corresponding to broken
curves as above in which $N=1$ (and it is perhaps a misnomer to call
such curves ``broken'').

\paragraph{Product cylinders.}
If the cobordism $(X,\lambda)$ and the almost complex structure $J$ on
$\overline{X}$ are very special, then $X$ may contain regions that
look like pieces of a symplectization, in the following sense:

\begin{definition}
\label{def:PR}
  A {\em product region\/} in $X$ is the image of an embedding
  $[s_-,s_+]\times Z \to X$, where $s_-<s_+$ and $Z$ is an open
  3-manifold, such that:
\begin{itemize}
\item $\{s_\pm\}\times Z$ maps to $Y_\pm$, and $(s_-,s_+)\times Z$
  maps to the interior of $X$.
\item The pullback of the Liouville form $\lambda$ to $[s_-,s_+]\times
  Z$ has the form $e^s\lambda_0$, where $s$ denotes the $[s_-,s_+]$
  coordinate, and $\lambda_0$ is a contact form on $Z$.
\item The pullback of the almost complex structure $J$ to
  $[s_-,s_+]\times Z$ has the following properties:
\begin{itemize}
\item
The restriction of $J$ to $\Ker(\lambda_0)$ is independent of $s$.
\item
$J(\partial/\partial s)=f(s)R_0$, where $f$ is a positive function of
$s$ and $R_0$ denotes the Reeb vector field for $\lambda_0$.
\end{itemize}
\end{itemize}
\end{definition}

Given a product region as above, the embedded Reeb orbits of
$\lambda_\pm$ in $\{s_\pm\}\times Z$ are identified with the embedded
Reeb orbits of $\lambda_0$ in $Z$.  If $\gamma$ is such a Reeb orbit,
then we can form a $J$-holomorphic cylinder in $\overline{X}$ by
taking the union of $[s_-,s_+]\times \gamma$ in $[s_-,s_+]\times Z$
with $(-\infty,0]\times \gamma$ in $(-\infty,0]\times Y_-$ and
$[0,\infty)\times\gamma$ in $[0,\infty)\times Y_+$.

\begin{definition}
\label{def:PC}
We call a $J$-holomorphic cylinder as above a {\em product
  cylinder.\/}
\end{definition}

\paragraph{Composition of cobordisms.} 
If $(X_1,\lambda_1)$ is an exact symplectic cobordism from
$(Y_+,\lambda_+)$ to $(Y_0,\lambda_0)$, and if $(X_2,\lambda_2)$ is an
exact symplectic cobordism from $(Y_0,\lambda_0)$ to
$(Y_-,\lambda_-)$, then we can compose them to obtain an exact
symplectic cobordism $(X_2\circ X_1,\lambda)$ from $(Y_+,\lambda_+)$
to $(Y_-,\lambda_-)$.  Here $X_2\circ X_1$ is obtained by gluing $X_1$
and $X_2$ along $Y_0$ analogously to \eqref{eqn:completion}, and
$\lambda|_{X_i}=\lambda_i$ for $i=1,2$.

\paragraph{Homotopy of cobordisms.}
Two exact symplectic cobordisms $(X,\omega_0)$ and $(X,\omega_1)$ from
$(Y_+,\lambda_+)$ to $(Y_-,\lambda_-)$ with the same underlying
four-manifold $X$ are {\em homotopic\/} if there is a one-parameter family
of symplectic forms $\{\omega_t\mid t\in[0,1]\}$ on $X$ such that
$(X,\omega_t)$ is an exact symplectic cobordism from $(Y_+,\lambda_+)$
to $(Y_-,\lambda_-)$ for each $t\in[0,1]$.

\paragraph{Scaling.}
If $\lambda$ is a nondegenerate contact form on $Y$, and if $c$ is a
positive constant, then there is a canonical ``scaling'' isomorphism
\begin{equation}
\label{eqn:scaling}
s:ECH_*^{L}(Y,\lambda) \stackrel{\simeq}{\longrightarrow}
 ECH_*^{cL}(Y,c\lambda).
\end{equation}
To see this, observe that the chain complexes on both sides have the
same generators.  Moreover, an ECH-generic almost complex structure
$J$ for $\lambda$ induces a symplectization-admissible almost complex
structure $J^c$ for $c\lambda$, such that $J$ and $J^c$ agree when
restricted to the contact planes $\xi$.  The self-diffeomorphism of
$\R\times Y$ sending $(s,y)\mapsto (cs,y)$ then induces a bijection
between $J$-holomorphic curves and $J^c$-holomorphic curves.  So with
these choices, the canonical identification of generators is an
isomorphism of chain complexes.  Moreover, it is shown in \cite{cc2}
that the resulting isomorphism \eqref{eqn:scaling} does not depend on
$J$ (under the canonical isomorphisms between the versions of ECH
defined using different almost complex structures).

\begin{theorem}
\label{thm:cob}
Let $(Y_+,\lambda_+)$ and $(Y_-,\lambda_-)$ be closed oriented
3-manifolds with nondegenerate contact forms.  Let $(X,\lambda)$ be an exact
symplectic cobordism from $(Y_+,\lambda_+)$ to $(Y_-,\lambda_-)$.
Then there exist maps (of ungraded $\Z/2$-modules)
\begin{equation}
\label{eqn:PhiL}
\Phi^L(X,\lambda): ECH_*^{L}(Y_+,\lambda_+) \longrightarrow
ECH_*^{L}(Y_-,\lambda_-)
\end{equation}
for each real number $L$, such that:
\begin{description}
\item{(Homotopy Invariance)} The map $\Phi^L(X,\lambda)$ depends only on $L$
  and the homotopy class of $(X,\omega)$.
\item{(Inclusion)} If $L<L'$ then the following diagram commutes:
\[
\begin{CD}
ECH_*^{L}(Y_+,\lambda_+) @>{\Phi^L(X,\lambda)}>> ECH_*^{L}(Y_-,\lambda_-) \\
@VV{\imath^{L,L'}}V @VV{\imath^{L,L'}}V \\
ECH_*^{L'}(Y_+,\lambda_+) @>{\Phi^{L'}(X,\lambda)}>> ECH_*^{L'}(Y_-,\lambda_-).
\end{CD}
\]
\item{(Direct Limit)}
\[
\lim_{L\to\infty}\Phi^L(X,\lambda) = \Phi(X): ECH_*(Y_+,\lambda_+) \longrightarrow
ECH_*(Y_-,\lambda_-),
\]
where $\Phi(X)$ is as in Definition~\ref{def:PhiX}.
\item{(Composition)}
If $(X,\lambda)$ is the composition of $(X_2,\lambda_2)$ and
$(X_1,\lambda_1)$ as above
with $\lambda_0$ nondegenerate, then
\[
\Phi^L(X_2\circ X_1,\lambda) = \Phi^L(X_2,\lambda_2) \circ
\Phi^L(X_1,\lambda_1).
\] 
\item{(Scaling)} If $c$ is a positive constant then the following
  diagram commutes:
\[
\begin{CD}
ECH_*^{L}(Y_+,\lambda_+) @>{\Phi^L(X,\lambda)}>> ECH_*^{L}(Y_-,\lambda_-) \\
@VV{s}V @VV{s}V \\
ECH_*^{cL}(Y_+,c\lambda_+) @>{\Phi^{cL}(X,c\lambda)}>> ECH_*^{cL}(Y_-,c\lambda_-).
\end{CD}
\]
\item{(Holomorphic Curves)} Let $J$ be a cobordism-admissible almost
  complex structure on $\overline{X}$ such that $J_+$ and $J_-$ are
  ECH-generic.  Then there exists a (noncanonical) chain map
\[
\hat{\Phi}^L : ECC_*^L(Y_+,\lambda_+,J_+) \longrightarrow
ECC_*^L(Y_-,\lambda_-,J_-)
\]
inducing $\Phi^L(X,\lambda)$, such that if $\Theta^+$ and $\Theta^-$
are ECH generators for $(Y_+,\lambda_+)$ and $(Y_-,\lambda_-)$
respectively with action less than $L$, then:
\begin{description}
\item{(i)} If there are no broken $J$-holomorphic curves in
  $\overline{X}$ from $\Theta^+$ to $\Theta^-$, then
  $\langle \hat{\Phi}^L\Theta^+,\Theta^-\rangle=0$.
\item{(ii)} If the only broken $J$-holomorphic curve in $\overline{X}$
  from $\Theta^+$ to $\Theta^-$ is a union of covers of product
  cylinders, then $\langle \hat{\Phi}^L\Theta^+,\Theta^-\rangle=1$.
\end{description}
\end{description}
\end{theorem}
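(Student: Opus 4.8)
The plan is to work on the Seiberg-Witten side throughout and transport everything through the isomorphism \eqref{eqn:echswf}, which the proof of the second author in \cite{e1,e2,e3,e4} provides in a filtered form. Recall that \eqref{eqn:echswf} is established by perturbing the three-dimensional Seiberg-Witten equations on $(Y_\pm,\lambda_\pm)$ by a large multiple $r$ of $d\lambda_\pm$, together with a small abstract perturbation for transversality; for $r$ large the irreducible solutions are, up to a grading shift, in bijection with the ECH generators, and a suitable energy functional $\mathcal{E}$, evaluated on such a solution, is approximately (for $r$ large) $2\pi r$ times the symplectic action \eqref{eqn:length} of the corresponding orbit set. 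Since the set of actions of orbit sets is discrete, for each $L$ avoiding this set and each $r$ sufficiently large one obtains a filtered Seiberg-Witten Floer cohomology $\widehat{HM}^*_L(Y_\pm,\lambda_\pm)$, namely the cohomology of the subcomplex on solutions of energy below $2\pi r L$, and the filtered refinement of \eqref{eqn:echswf} identifies it with $ECH^L_*(Y_\pm,\lambda_\pm)$, independently of $r$ and of the auxiliary choices (this is carried out in \cite{cc2}). One then constructs $\Phi^L(X,\lambda)$ by running the Kronheimer-Mrowka cobordism construction with the four-dimensional Seiberg-Witten equations on $\overline{X}$ perturbed by $r\,\omega$ on $X$ and by $r\,d\lambda_\pm$ on the ends. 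The essential input is the exactness of the cobordism: Stokes' theorem yields the Seiberg-Witten analogue of \eqref{eqn:stokes}, bounding the drop in $\mathcal{E}$ across $\overline{X}$ from below by $0$, so that for $r$ large (and $L$ avoiding the discrete action spectrum) the cobordism chain map carries the subcomplex of energy $<2\pi r L$ into the subcomplex of energy $<2\pi r L$ on $Y_-$. Composing with \eqref{eqn:echswf} on both sides produces the maps \eqref{eqn:PhiL}.

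Granting the construction, the five formal properties follow by tracking the filtration through standard features of Seiberg-Witten Floer cohomology. Homotopy Invariance holds because a homotopy of exact symplectic cobordisms induces a chain homotopy of Seiberg-Witten cobordism maps, and exactness makes the energy estimate uniform along the homotopy, so the chain homotopy can be taken to respect the energy filtration. Inclusion is immediate since $\hat\Phi^L$ is given at chain level by a single count, restricted to the relevant subcomplexes for each $L$. Direct Limit holds because $\widehat{HM}^*(Y_\pm)$ is the direct limit over $L$ of its filtered pieces and the Kronheimer-Mrowka cobordism map is compatible with this, so the direct limit of the $\Phi^L$ is by construction $\Phi(X)$. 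Composition is the neck-stretching argument for Seiberg-Witten solutions along $Y_0$: solutions on $X_2\circ X_1$ with a long neck converge to broken configurations built from a solution on $\overline{X_1}$, Seiberg-Witten trajectories on $\R\times Y_0$, and a solution on $\overline{X_2}$, and exactness makes $\mathcal{E}$ additive across the break, yielding the factorization of filtered maps. Scaling follows since replacing $(\lambda_\pm,L)$ by $(c\lambda_\pm,cL)$ is absorbed by the substitution $r\mapsto r/c$ in the perturbation, under which solution sets and energies transform compatibly with \eqref{eqn:scaling}.

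The Holomorphic Curves property is where the real work lies. Fix a cobordism-admissible $J$ with $J_\pm$ ECH-generic, and choose the abstract perturbation of the $r\,\omega$-perturbed equations on $\overline{X}$ adapted to $J$, so that $\hat\Phi^L$ counts the resulting solutions. The analytic core, adapted from \cite{e1,e4}, is a compactness theorem: as $r\to\infty$, any sequence of such solutions on $\overline{X}$ converges, after passing to a subsequence and allowing the usual bubbling along the ends, to a broken $J$-holomorphic curve from $\Theta^+$ to $\Theta^-$ in the sense of Definition~\ref{def:broken}, the limiting energy recovering $2\pi r$ times the area identity \eqref{eqn:stokes}. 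Property (i) follows immediately: if no such broken curve exists then for $r$ large there are no solutions, so $\langle\hat\Phi^L\Theta^+,\Theta^-\rangle=0$. For property (ii) one also needs the converse gluing statement in the single case at hand: on a product region, by Definition~\ref{def:PR}, the $J$-adapted perturbed Seiberg-Witten equations on the corresponding piece of $\overline{X}$ decouple exactly as in the three-dimensional analysis of the symplectization, so the Seiberg-Witten configuration attached to a union of covers of product cylinders exists, is nondegenerate for $r$ large, and is the unique solution in its neighborhood; hence it contributes $1$ mod $2$, giving $\langle\hat\Phi^L\Theta^+,\Theta^-\rangle=1$.

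The main obstacle is exactly this Seiberg-Witten-to-holomorphic-curves correspondence on the noncompact completed cobordism $\overline{X}$: one must prove the requisite a priori estimates and exponential decay on the two symplectization ends uniformly in $r$, rule out energy leakage and ``dark'' components in the $r\to\infty$ limit, and, for property (ii), perform the gluing and obstruction analysis that identifies the Seiberg-Witten solution associated to a family of product cylinders and pins its count at $1$. This amounts to a cobordism-theoretic and parametrized refinement of the entire analytic apparatus of \cite{e1,e4}, which is why the detailed proof is deferred to \cite{cc2}. A secondary but nontrivial technical point is verifying that the $r$-dependent filtered Seiberg-Witten cohomologies and cobordism maps stabilize as $r\to\infty$ and that $\Phi^L$ is independent of all choices; this requires developing the filtered continuation-map formalism on the Seiberg-Witten side, again carried out in \cite{cc2}.
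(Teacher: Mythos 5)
Your proposal matches the strategy the paper itself sketches for Theorem~\ref{thm:cob}: perturb the Seiberg--Witten equations on $\overline{X}$ by a large multiple $r$ of the symplectic form, use the energy--action correspondence and Stokes' theorem together with exactness to see that the Kronheimer--Mrowka cobordism map respects the filtration, derive the formal axioms from the Seiberg--Witten cobordism formalism (chain homotopy for Homotopy Invariance, neck-stretching for Composition, the $r\mapsto r/c$ substitution for Scaling), and establish the Holomorphic Curves axiom via the $r\to\infty$ compactness and gluing analysis adapted from the ECH--SWF equivalence papers. The paper does not in fact prove Theorem~\ref{thm:cob} here but defers the detailed argument to the sequel \cite{cc2}, giving only the one-paragraph outline that your sketch faithfully expands, so there is no substantive discrepancy to report.
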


\begin{example}
  For any three-manifold with a nondegenerate contact form, the empty
  set of Reeb orbits is a cycle in the ECH chain complex (whose
  homology class in ECH
  corresponds to the ``contact invariant'' in Seiberg-Witten Floer
  cohomology, see \cite{e5}).
  If $\emptyset_\pm$ denotes the empty set of Reeb orbits, regarded as a
  generator of the ECH chain complex for $Y_\pm$, then it follows from
  the Holomorphic Curves axiom that
\[
\Phi^L(X,\lambda): [\emptyset_+] \longmapsto [\emptyset_-].
\] 
The reason is that for any cobordism-admissible almost complex structure $J$, by
\eqref{eqn:stokes} there is a unique $J$-holomorphic curve with no
positive end, namely the empty holomorphic curve.
\end{example}

\begin{remark}
  Theorem~\ref{thm:cob} has applications beyond the chord conjecture,
  for example to symplectic embedding obstructions \cite{qech}.  The
  Scaling axiom is not needed for the proof of the chord conjecture,
  but is useful in these other applications.
\end{remark}

\section{Legendrian surgery}
\label{sec:ls}

We now explain the details of the Legendrian surgery construction we
will use.  In particular we define a sequence of Legendrian surgeries,
related to each other by exact symplectic cobordisms, in which the
Reeb vector field is increasingly well-behaved.

To begin, recall that a {\em Liouville vector field\/} on
a symplectic manifold $(X,\omega)$ is a vector field $V$ such that
$\mc{L}_V\omega = \omega$.  A Liouville vector field $V$ is equivalent
to a $1$-form $\lambda$ such that $d\lambda=\omega$, via the equation
$\lambda=\imath_V\omega$.

If $Y$ is a hypersurface in $(X,\omega)$ transverse to a Liouville
vector field $V$, then $\lambda_Y\eqdef \lambda|_Y$ is a contact form
on $Y$.  Now let $Y'$ be another hypersurface transverse to $V$,
and suppose that the time $t$ flow of $V$ defines a diffeomorphism
$\phi:Y\to Y'$, where $t$ is some function on $Y$.  Then the contact
forms on $Y$ and $Y'$ are related by
\begin{equation}
\label{eqn:et}
\phi^*\lambda_{Y'} = e^t\lambda_Y.
\end{equation}

With the above preliminaries out of the way, consider now a closed
oriented 3-manifold $Y_0$ with a nondegenerate contact form
$\lambda_0$.  Let $\mc{K}$ be a Legendrian knot in $(Y_0,\lambda_0)$.
Let $Y_1$ be the 3-manifold obtained by surgery along $\mc{K}$ with
framing $tb(\mc{K})-1$.

\begin{proposition}
\label{prop:ls}
There exist:
\begin{itemize}
\item a nondegenerate contact form $\lambda_1$ on $Y_1$,
\item an exact symplectic cobordism $(X,\lambda)$ from
  $(Y_1,\lambda_1)$ to $(Y_0,\lambda_0)$; let $V$ denote its
  associated associated Liouville vector field;
\item a compact hypersurface $Y_{1/n}$ in $X$ transverse to $V$ for each
  positive integer $n$,
\item and neighborhoods $U_{1/n}$ of $\mc{K}$ in $Y_0$ with
  $U_{1/n}\supset U_{1/(n+1)}$ and $\bigcap_{n=1}^\infty
  U_{1/n}=\mc{K}$,
\end{itemize}
with the following properties:
\begin{description}
\item{(a)} The induced contact form $\lambda_{1/n}$ on $Y_{1/n}$ is
  nondegenerate.
\item{(b)} The negative time flow of $V$ induces a diffeomorphism
  $Y_{1/n}\stackrel{\simeq}{\to} Y_{1/(n+1)}$.  (The flow time varies
  over $Y_{1/n}$.)
\item{(c)} The time $-1/n$ flow of $V$, call it $\phi_{-1/n}$, is
  defined on all of $Y_{1/n}$.
There is a subset $\widetilde{U}_{1/n}$ of $Y_{1/n}$ such that
\[
\phi_{-1/n}(Y_{1/n}\setminus \widetilde{U}_{1/n}) = Y_0\setminus
  U_{1/n}.
\]
\item{(d)} The Reeb vector field on $(Y_{1/n},\lambda_{1/n})$ has no
  closed orbits contained entirely within $\widetilde{U}_{1/n}$.
\end{description}
\end{proposition}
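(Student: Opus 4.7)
The construction of $(X,\lambda)$ and $(Y_1,\lambda_1)$ follows Weinstein \cite{weinstein}: identify a tubular neighborhood $N(\mc{K})$ of $\mc{K}$ in $(Y_0,\lambda_0)$ with a standard contact Darboux model, and attach a symplectic $2$-handle to the symplectization piece $[0,1]\times Y_0$ along $\{1\}\times\mc{K}$ with framing $tb(\mc{K})-1$. The handle is chosen in a standard normal form so that the induced contact form $\lambda_1$ on the top boundary $Y_1$ is nondegenerate and the Liouville vector field $V$ has a unique index-$1$ critical point in the interior of the handle. Outside a neighborhood $W$ of the handle in $X$, the cobordism is isomorphic via $V$-flow to a piece of the symplectization of $Y_0\setminus N(\mc{K})$.

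The nested family $Y_{1/n}$ is then constructed by truncating $X$ at progressively lower heights. Concretely, one flows $Y_0\setminus U_{1/n}$ upward along $V$ by time $1/n$ to obtain an open hypersurface over the exterior, and closes it off inside the handle region $W$ by a compact piece $\widetilde{U}_{1/n}$ transverse to $V$. The neighborhoods $U_{1/n}\subset Y_0$ are chosen so that they shrink to $\mc{K}$ and the closing-off pieces are nested, with negative $V$-flow carrying $Y_{1/n}$ diffeomorphically onto $Y_{1/(n+1)}$. Properties (b) and (c) then follow directly from this construction, since outside the handle region $V$-flow is simply translation in the symplectization direction, and \eqref{eqn:et} supplies the matching between $\lambda_{1/n}$ and $e^{1/n}\lambda_0$.

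The technical heart of the proposition is properties (a) and (d). Nondegeneracy of $\lambda_{1/n}$ on $Y_{1/n}\setminus\widetilde{U}_{1/n}$ is inherited from that of $\lambda_0$ via the rescaling \eqref{eqn:et}, while nondegeneracy inside $\widetilde{U}_{1/n}$ can be arranged by a generic choice of handle profile. The main obstacle is property (d): the Weinstein handle must be designed so that its Reeb vector field admits no closed orbits confined to the handle region. The plan is to use an explicit Weinstein handle model, along the lines of those appearing in the Legendrian surgery formula of Bourgeois-Ekholm-Eliashberg \cite{bee}, in which the Reeb trajectories inside the handle can be tracked directly and every closed Reeb orbit meeting the handle is forced to exit through the attaching region into the exterior (corresponding ultimately to a cyclic word in Reeb chords of $\mc{K}$). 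The bulk of the remaining work is then to verify that this exit property survives truncation at each height $1/n$ and is preserved by the small perturbations needed for nondegeneracy.
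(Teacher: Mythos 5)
Your high-level outline---attach a Weinstein $2$-handle to $[0,1]\times Y_0$ along $\{1\}\times\mc{K}$, then produce $Y_{1/n}$ by truncating with taller, thinner handles over $[0,1/n]\times Y_0$---matches the paper's construction, and your treatment of (b) and (c) (flow along $V$ being translation outside the handle, with \eqref{eqn:et} controlling the rescaling) is correct. You are also right that the nondegeneracy in (a) must be arranged by perturbation; the paper realizes such a perturbation of $\lambda_{1/n}$ as a $C^1$-small perturbation of the hypersurface $Y_{1/n}$ inside $X$, again via \eqref{eqn:et}.

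The genuine gap is in property (d), which you yourself identify as the ``technical heart'' but then do not actually prove. Saying that the handle model lets one ``track Reeb trajectories directly'' and that ``every closed Reeb orbit meeting the handle is forced to exit through the attaching region'' is a restatement of the conclusion, not a mechanism; the same is true of your final sentence, which defers ``the bulk of the remaining work'' to a verification you never carry out. What is missing is a concrete reason why the Reeb flow in the handle region has no recurrence. The paper supplies one: in Weinstein's explicit model on $\R^4$ with $\omega=\sum dp_i\,dq_i$ and Liouville field $V=\sum(-p_i\partial_{p_i}+2q_i\partial_{q_i})$, the rounded handle boundary is a level set of a function $f(q_1^2+q_2^2,\,p_1^2+p_2^2)$ with $\partial f/\partial x>0$ and $\partial f/\partial y\le 0$; the Reeb field $R_1$ there is proportional to the Hamiltonian vector field of $f$, and one computes that $R_1(h)>0$ for $h:=p_1q_1+p_2q_2$. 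Strict monotonicity of $h$ along the Reeb flow rules out closed orbits in $\widetilde U_{1/n}$ outright, and because $R_1(h)>0$ is an open condition it survives the $C^1$-small perturbation used for (a). Without exhibiting such a monotone quantity (or some equivalent), your proposal does not establish (d), and hence does not establish the proposition.

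Two smaller points: your claim that the standard normal form already makes $\lambda_1$ nondegenerate is not automatic---the paper perturbs to achieve it---and your invocation of ``cyclic words in Reeb chords'' pertains to the later Lemma~\ref{lem:ro}, not to the statement being proved here, where only the absence of closed Reeb orbits inside $\widetilde U_{1/n}$ is at issue.
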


\begin{proof}
  The idea for building the cobordism $(X,\lambda)$ is to start with
  the exact symplectic cobordism $([0,1]\times Y_0,e^s\lambda_0)$,
  where $s$ denotes the $[0,1]$ coordinate.  One then attaches a
  2-handle with an appropriate Liouville form to $\{1\}\times Y_0$ in
  a neighborhood of $\{1\}\times\mc{K}$.  We proceed in four steps.

  {\em Step 1.\/} We first describe a model for the handle attachment,
  following \cite{weinstein}.  Consider $\R^4$ with coordinates
  $q_1,q_2,p_1,p_2$ and the symplectic form $\omega=\sum_{i=1}^2
  dp_i\, dq_i$.  Define a Liouville vector field on $\R^4$ by
\[
V = \sum_{i=1}^2 \left(-p_i\frac{\partial}{\partial p_i} +
  2q_i\frac{\partial}{\partial q_i}\right).
\]
Consider the hypersurface
\[
Y = (p_1^2+p_2^2=1)\subset \R^4,
\]
regarded as the boundary of $(p_1^2+p_2^2\ge 1)$.  The Liouville
vector field $V$ is transverse to $Y$ and so induces a contact form on
$Y$.  With respect to this contact form, the circle
\[
C=(q_1=q_2=0,p_1^2+p_2^2=1)
\]
is a Legendrian knot.  To the region $(p_1^2+p_2^2\ge 1)$ we now
attach the $2$-handle consisting of the subset of $\R^4$ where
\[
p_1^2+p_2^2\le 1, \quad \quad q_1^2+q_2^2\le \varepsilon
\]
for some $\varepsilon>0$.  The boundary of the region with the
2-handle attached has a corner where $p_1^2+p_2^2=1$ and $q_1^2+q_2^2
= \varepsilon$.  To round the corner, we replace the boundary
hypersurface $q_1^2+q_2^2=\varepsilon$ of the handle with a nearby
hypersurface, staying within the region
$\varepsilon/2<q_1^2+q_2^2\le\varepsilon$, and defined by an
equation of the form
\begin{equation}
\label{eqn:fqp}
f(q_1^2+q_2^2,p_1^2+p_2^2)=0,
\end{equation}
where at each point on the zero set of $f$ we have $\partial
f/\partial x > 0$ and $\partial f/\partial y \le 0$.  The
boundary of the region with the 2-handle attached is then a smooth
hypersurface $Y'$ which is transverse to the Liouville vector field
$V$.

{\em Step 2.\/} We now pass from the model case to the case of
interest.  By \cite[Prop.\ 4.2]{weinstein}, there is a diffeomorphism
of a neighborhood $N$ of $C$ in $\R^4$ with a neighborhood of
$\{1\}\times \mc{K}$ in the symplectization $\R\times Y_0$, which
respects the symplectic forms and Liouville vector fields and locally
identifies the hypersurface $Y$ in $\R^4$ with the hypersurface
$\{1\}\times Y_0$ in $\R\times Y_0$.  If $\varepsilon>0$ is
sufficiently small, then $N\cap Y$ will contain the region in $Y$ to
which the $2$-handle in $\R^4$ is attached.  We then use the above
diffeomorphism to attach the $2$-handle described above in $\R^4$,
with its symplectic form and Liouville vector field, to $[0,1]\times
Y_0$.  We now provisionally define $X$ to be the resulting exact
symplectic cobordism, and $(Y_1,\lambda_1)$ to be its positive
boundary with the induced contact form.  It is not hard to check that
as a smooth 3-manifold, $Y_1$ is obtained from $Y_0$ by surgery on
$\mc{K}$ with framing $tb(\mc{K})-1$.  We also define
$\widetilde{U}_1$ to be the part of $Y_1$ in the handle, and
$U_1=Y_0\setminus \phi_{-1}(Y_1\setminus\widetilde{U}_1)$.  That is,
$U_1\subset Y_0$ corresponds to the subset of $\{1\}\times Y_0$ to
which the handle is attached.

{\em Step 3.\/} We now check that the Reeb vector field $R_1$ on $Y_1$
has the required properties.  We first show that $R_1$ has no closed
orbit contained in $\widetilde{U}_1$. On $\widetilde{U}_1$, in terms
of the coordinates on $\R^4$, the Reeb vector field $R_1$ is parallel
to the Hamiltonian vector field associated to the function
\eqref{eqn:fqp}.  Thus
\[
gR_1 = \frac{\partial f}{\partial
    x}\left(q_1\frac{\partial}{\partial p_1} +
    q_2\frac{\partial}{\partial p_2}\right) -
  \frac{\partial f}{\partial y}\left(p_1\frac{\partial}{\partial q_1}
    + p_2\frac{\partial}{\partial q_2}\right),
\]
where $g$ is some positive function on $\widetilde{U}_1$.  Now define
another function $h$ on $\widetilde{U}_1$ by
\[
h \eqdef p_1q_1 + p_2q_2.
\]
We then compute that
\begin{equation}
\label{eqn:Rh}
R_1(h)>0
\end{equation}
on all of $\widetilde{U}_1$.  It follows immediately that $R_1$ has
no closed orbit contained in $\widetilde{U}_1$.

Next we consider nondegeneracy of $\lambda_1$.  By construction,
$\lambda_1$ is a constant multiple of $\lambda_0$ outside of
$\widetilde{U}_1$.  Since $\lambda_0$ was assumed nondegenerate, it
follows that any Reeb orbit for $\lambda_1$ that avoids the region
$\widetilde{U}_1$ is nondegenerate as well.  Consequently we can make
$\lambda_1$ nondegenerate by perturbing it (specifically, multiplying
it by a positive function close to $1$) in $\widetilde{U}_1$.  By
equation \eqref{eqn:et}, such a perturbation of $\lambda_1$ can be
effected by perturbing the hypersurface $\widetilde{U}_1$ in the
definition of $X$.  If this perturbation is sufficiently $C^1$-small,
then \eqref{eqn:Rh} will still hold, so the Reeb vector field of
$\lambda_1$ will now have all of the required properties.

{\em Step 4.\/} The hypersurface $Y_{1/n}\subset X$ is now defined to
be the positive boundary of the region obtained by starting with
$[0,1/n]\times Y_0$ and attaching a taller and thinner $2$-handle.
This handle is obtained by starting with the subset of $\R^4$ where
\[
p_1^2+p_2^2 \le e^{2(1-1/n)}, \quad\quad q_1^2+q_1^2 \le
2^{1-n}\varepsilon,
\]
then rounding corners as before and perturbing if necessary to make
$\lambda_{1/n}$ nondegenerate.  Finally, one defines
$\widetilde{U}_{1/n}$ to be the part of $Y_{1/n}$ in
the handle, and
$U_{1/n}=Y_0\setminus\phi_{-1/n}(Y_1\setminus\widetilde{U}_1)$.
\end{proof}

A basic consequence of the above construction is the following:

\begin{lemma}
\label{lem:ro}
Suppose $\mc{K}$ has no Reeb chord with action $\le L$.  Then for all
$n$ sufficiently large:
\begin{description}
\item{(a)} The Reeb orbits of $\lambda_{1/n}$ with action $<e^{1/n}L$
  avoid the region $\widetilde{U}_{1/n}$.
\item{(b)} $\phi_{-1/n}$ defines a bijection from the Reeb orbits of
  $\lambda_{1/n}$ with action $<e^{1/n}L$ to the Reeb orbits
  of $\lambda_0$ with action $<L$.
\end{description}
\end{lemma}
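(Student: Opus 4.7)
My plan is to prove (a) by contradiction and then deduce (b). For (a), suppose there is a sequence $n_k\to\infty$ and closed Reeb orbits $\gamma_k$ of $\lambda_{1/n_k}$ of action less than $e^{1/n_k}L$, each meeting $\widetilde{U}_{1/n_k}$. By Proposition~\ref{prop:ls}(d), $\gamma_k$ is not entirely contained in $\widetilde{U}_{1/n_k}$, so it decomposes into alternating maximal arcs inside and outside $\widetilde{U}_{1/n_k}$. Since the Liouville flow gives $\phi_{-1/n_k}^*\lambda_0=e^{-1/n_k}\lambda_{1/n_k}$ on $Y_{1/n_k}\setminus\widetilde{U}_{1/n_k}$, each outside arc maps under $\phi_{-1/n_k}$ to a Reeb trajectory of $\lambda_0$ in $Y_0\setminus U_{1/n_k}$ with endpoints on $\partial U_{1/n_k}$, and the sum of the $\lambda_0$-actions of these trajectories is bounded by $L$. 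Let $\beta_k$ be the outside arc of maximal action.

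I would then run a compactness argument: on the compact manifold $Y_0$, after passing to a subsequence the parametrized arcs $\beta_k$ converge in $C^1$ to a Reeb trajectory $\beta_\infty$ of $\lambda_0$ with action at most $L$, whose endpoints lie on $\mc{K}$ since $\partial U_{1/n_k}\to \mc{K}$. If $\beta_\infty$ has positive action, it is a Reeb chord of $\mc{K}$ of action at most $L$, contradicting the hypothesis.

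The main obstacle is to guarantee $\mc{A}(\beta_\infty)>0$, i.e., that $\beta_k$ does not collapse to a point. My plan is to exploit the Legendrian condition $T\mc{K}\subset\xi$, together with $R\notin\xi$, which forces $R$ to have a uniformly nonvanishing ``transverse'' component along $\mc{K}$. A normal-form analysis of $R$ in a fixed tubular neighborhood $N$ of $\mc{K}$ (in which, to leading order, $R$ is a constant vector field transverse to $\mc{K}$) shows that a Reeb trajectory starting and ending on $\partial U_{1/n}$ and staying in $N\setminus U_{1/n}$ cannot be too short: at first order, the trajectories that return to $\partial U_{1/n}$ in small time actually dip back into $U_{1/n}$, so bringing such a trajectory back to $\partial U_{1/n}$ on the outside requires higher-order corrections, costing a definite positive amount of time independent of $n$. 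Trajectories that instead leave $N$ altogether accrue action bounded below by the width of $N$ divided by $\sup\|R\|$. Either way $\liminf_k\mc{A}(\beta_k)>0$, completing the contradiction.

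For (b), part (a) shows that for large $n$ every Reeb orbit of $\lambda_{1/n}$ of action $<e^{1/n}L$ lies in $Y_{1/n}\setminus\widetilde{U}_{1/n}$ and so is identified via $\phi_{-1/n}$ with a Reeb orbit of $\lambda_0$ in $Y_0\setminus U_{1/n}$ of action $<L$, giving injectivity. For surjectivity, note that no Reeb orbit of $\lambda_0$ of action $<L$ meets $\mc{K}$ (else the segment between two successive visits to $\mc{K}$ would be a Reeb chord of action $<L$); by nondegeneracy of $\lambda_0$ only finitely many such orbits exist, and their union is a compact set disjoint from $\mc{K}$. For $n$ sufficiently large $U_{1/n}$ is disjoint from this set, so every such $\lambda_0$-orbit lifts via $\phi_{1/n}$ to a $\lambda_{1/n}$-orbit with action $<e^{1/n}L$.
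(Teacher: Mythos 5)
Your proof is correct and follows essentially the same contradiction-and-compactness strategy as the paper's, for both (a) and (b). The one point you treat in more detail than the paper---why the limiting trajectory cannot collapse to a constant---is a genuine gap in the paper's terse exposition, and your use of the transversality $R \pitchfork \mc{K}$ (forced by the Legendrian condition $T\mc{K}\subset\xi$) is exactly the right input: in tubular coordinates, a first-order Taylor expansion of $|\beta(t)|^2_\perp - \rho_n^2$ has a positive quadratic coefficient (this is transversality), so a Reeb arc that starts and re-enters $\partial U_{1/n}$ at a small time must have been strictly inside $U_{1/n}$ in between, which forbids arbitrarily short excursions and bounds the action of each outside arc below by a constant independent of $n$.
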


\begin{proof}
  Suppose $\{n_k\}_{k=1,2,\ldots}$ is an increasing sequence of
  positive integers and that for each $k$ there exists a Reeb orbit
  $\gamma_k$ for $\lambda_{1/n_k}$ of action $<e^{1/n_k}L$
  intersecting $\widetilde{U}_{1/n_k}$.  Then for each $k$, the set
  $\phi_{-1/n_k}(\gamma_k\cap(Y_{1/n_k}\setminus\widetilde{U}_{1/n_k}))$
  is a union of Reeb trajectories of $\lambda_0$ starting and ending
  on the boundary of $U_{1/n_k}$ with total action less than $L$.
  Picking one of these trajectories for each $k$, we can pass to a
  subsequence so that these trajectories converge to a Reeb chord with
  action $\le L$.  This proves (a).  Similarly, if $n$ is sufficiently
  large then the Reeb orbits of $\lambda_0$ with action $<L$ avoid the
  region $U_{1/n}$.  This together with (a) implies (b).
\end{proof}

\section{The chord conjecture: nondegenerate case}
\label{sec:ccnondegenerate}

We now prove the chord conjecture, Theorem~\ref{thm:cc}, in the case
when the contact form $\lambda_0$ is nondegenerate.  Below, we use the
notation from the Legendrian surgery construction in
Proposition~\ref{prop:ls}.  Also, to shorten the notation we write
$H_*(Y,\lambda)$ to denote $ECH_*(Y,\lambda)$, and $H_*^L(Y,\lambda)$
to denote $ECH_*^{L}(Y,\lambda)$.

Observe that by the construction in \S\ref{sec:ls}, the exact
symplectic cobordism $(X,\lambda)$ contains an exact symplectic
cobordism from $(Y_{1/n},\lambda_{1/n})$ to $(Y_0,\lambda_0)$, call
this $X_{1/n}$.  The main lemma is now:
\begin{lemma}
\label{lem:main}
Let $L>0$.  Suppose that $\mc{K}$ has no Reeb chord of action $\le L$.
Then for all $n$ sufficiently large, the cobordism map
\begin{equation}
\label{eqn:cob1}
\Phi^{e^{1/n}L}(X_{1/n},\lambda): H_*^{e^{1/n}L}(Y_{1/n},\lambda_{1/n}) \longrightarrow
H_*^{e^{1/n}L}(Y_0,\lambda_0)
\end{equation}
is the composition of an isomorphism
\begin{equation}
\label{eqn:iso1}
H_*^{e^{1/n}L}(Y_{1/n},\lambda_{1/n}) \stackrel{\simeq}{\longrightarrow}
H_*^L(Y_0,\lambda_0)
\end{equation}
with the inclusion-induced map $\imath^{L,e^{1/n}L}: H_*^L(Y_0,\lambda_0)\to
H_*^{e^{1/n}L}(Y_0,\lambda_0)$.
\end{lemma}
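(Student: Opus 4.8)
The plan is to combine Lemma~\ref{lem:ro} with the Holomorphic Curves axiom of Theorem~\ref{thm:cob}. By hypothesis $\mc{K}$ has no Reeb chord of action $\le L$, so Lemma~\ref{lem:ro} applies: for $n$ large, the time $-1/n$ flow $\phi_{-1/n}$ of the Liouville vector field $V$ gives a bijection between the Reeb orbits of $\lambda_{1/n}$ of action $<e^{1/n}L$ and the Reeb orbits of $\lambda_0$ of action $<L$, and moreover all such orbits of $\lambda_{1/n}$ avoid $\widetilde{U}_{1/n}$. Since outside $\widetilde{U}_{1/n}$ the cobordism $X_{1/n}$ is (after the flow identification) literally a product region in the sense of Definition~\ref{def:PR} --- the Liouville form is $e^s\lambda_0$ there by \eqref{eqn:et} --- I expect this bijection of low-action orbits to extend to a bijection of ECH \emph{generators} (admissible orbit sets) of action below the respective thresholds, preserving the elliptic/hyperbolic dichotomy, hence giving a canonical identification of the chain complexes $ECC_*^{e^{1/n}L}(Y_{1/n},\lambda_{1/n};J_+)$ and $ECC_*^{L}(Y_0,\lambda_0;J_-)$ for a suitably chosen cobordism-admissible $J$.

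The first step, then, is to choose $J$ on $\overline{X_{1/n}}$ to be cobordism-admissible with $J_\pm$ ECH-generic, and arranged so that on the product region (the image of $Y_{1/n}\setminus\widetilde{U}_{1/n}$ under the flow, which contains all the relevant orbits) $J$ has the product form required by Definition~\ref{def:PR}. Applying the Holomorphic Curves axiom, I get a chain map $\hat\Phi^{e^{1/n}L}$ inducing \eqref{eqn:cob1}. The second step is to analyze broken $J$-holomorphic curves from an ECH generator $\Theta^+$ (action $<e^{1/n}L$) to an ECH generator $\Theta^-$ (action $<L$). Because every Reeb orbit entering $\Theta^+$ lies in the product region, any $J$-holomorphic curve from $\Theta^+$ is forced by a combination of the product structure and the action/area inequality \eqref{eqn:stokes} to be a union of covers of product cylinders: the orbits at its positive ends are all in the product region, the curve cannot escape into $\widetilde{U}_{1/n}$ (there are no Reeb orbits there, by part (d) of Proposition~\ref{prop:ls}, to serve as asymptotic limits, and nonnegativity of $d\lambda|_C$ with the product form pins the curve to the orbit cylinders), and so on down through any breaking. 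Thus for each ECH generator $\Theta^+$ with action $<e^{1/n}L$ there is a unique ECH generator $\Theta^-$, namely $\phi_{-1/n}(\Theta^+)$ with action $<L$, admitting a broken curve, and that broken curve is a union of covers of product cylinders; by clauses (i) and (ii) of the Holomorphic Curves axiom, $\langle\hat\Phi^{e^{1/n}L}\Theta^+,\Theta^-\rangle=1$ for this matched pair and $0$ for all other $\Theta^-$ with action $<L$.

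The third step is to conclude. The previous step shows that, on action-filtered chain complexes, $\hat\Phi^{e^{1/n}L}$ sends each generator $\Theta^+$ to $\phi_{-1/n}(\Theta^+)$ plus possibly a sum of ECH generators of $\lambda_0$ with action \emph{between} $L$ and $e^{1/n}L$. In other words, composing with the projection of $ECC_*^{e^{1/n}L}(Y_0,\lambda_0)$ onto the quotient by the subcomplex $ECC_*^{L}(Y_0,\lambda_0)$... no: more precisely, $\hat\Phi^{e^{1/n}L}$ restricted to the full complex $ECC_*^{e^{1/n}L}(Y_{1/n},\lambda_{1/n})$ (which, since there are no orbits in $\widetilde{U}_{1/n}$ of action $<e^{1/n}L$, has all generators of action $<e^{1/n}L$ coming from the product region) factors through the subcomplex $ECC_*^{L}(Y_0,\lambda_0)\subset ECC_*^{e^{1/n}L}(Y_0,\lambda_0)$ via the generator-bijection $\phi_{-1/n}$, which is an isomorphism of chain complexes onto $ECC_*^{L}(Y_0,\lambda_0)$. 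Passing to homology, $\Phi^{e^{1/n}L}(X_{1/n},\lambda)$ factors as the isomorphism \eqref{eqn:iso1} induced by $\phi_{-1/n}$ followed by $\imath^{L,e^{1/n}L}$, which is exactly the claim. (One must check $\phi_{-1/n}$ is a chain map for suitable $J_\pm$; this is again the product-region computation, and follows from the same curve analysis applied to the trivial product piece, or directly from the Holomorphic Curves axiom applied to the product cobordism $[0,1/n]\times Y_0$ and the Composition axiom.)

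The main obstacle I anticipate is the curve-counting bookkeeping in the second step: rigorously ruling out broken $J$-holomorphic curves from $\Theta^+$ to $\Theta^-$ other than unions of covers of product cylinders, i.e.\ showing that a curve whose positive ends all lie over the product region, and which has ECH index $0$ and action-compatible negative ends, cannot develop a component that wanders into $\widetilde{U}_{1/n}$ or into the genuinely nonproduct part of $X_{1/n}$ near the handle. This requires carefully invoking nonnegativity of the integrands in \eqref{eqn:stokes}, the fact (Proposition~\ref{prop:ls}(d), Lemma~\ref{lem:ro}(a)) that $\widetilde{U}_{1/n}$ contains no short Reeb orbits to serve as asymptotic limits, and a confinement argument showing that a curve with all ends in a product region stays in that region --- the sort of argument standard in symplectic field theory but needing the precise product hypotheses of Definition~\ref{def:PR} to be verified for $X_{1/n}$.
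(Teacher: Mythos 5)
Your overall plan matches the paper's: invoke Lemma~\ref{lem:ro} for the orbit bijection, set up a cobordism-admissible $J$ making $[0,1/n]\times(Y_{1/n}\setminus\widetilde{U}_{1/n})$ a product region, apply the Holomorphic Curves axiom, and conclude that $\hat\Phi^{e^{1/n}L}$ factors through $ECC_*^L(Y_0,\lambda_0)$.  However, the heart of your second step contains a genuine gap, and it is precisely the one you flag at the end.  You claim that for $\Theta^+$ an ECH generator of action $<e^{1/n}L$, the \emph{only} $\Theta^-$ admitting a broken $J$-holomorphic curve from $\Theta^+$ is $\phi_{-1/n}(\Theta^+)$, i.e.\ that the chain map is literally the permutation $\phi_{-1/n}$.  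This ``confinement'' claim is not established and is not in general true: a $J$-holomorphic curve whose positive ends all lie over the product region can still wander through the handle region $X_{1/n}^1$ or through $\widetilde{U}_{1/n}$.  Nothing forces it to stay in the product region --- it does not need Reeb orbits in $\widetilde{U}_{1/n}$ to do so, since it is merely passing through the interior of the cobordism.  Absence of short orbits in $\widetilde{U}_{1/n}$ only constrains asymptotic behavior, not the map's interior image, and nonnegativity of $d\lambda|_C$ by itself does not pin the curve to the product cylinders.  Consequently $\hat\Phi^{e^{1/n}L}\Theta^+$ may well contain terms other than $\phi_{-1/n}(\Theta^+)$, and $\phi_{-1/n}$ itself need not be a chain map.

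The paper's proof uses a weaker but sufficient observation that entirely sidesteps this.  Instead of ruling out other curves, it applies Stokes' theorem, using the fact that $\omega|_C$ is \emph{pointwise strictly positive} on $C\cap X_{1/n}^1$ (by $\omega$-compatibility) and pointwise nonnegative on the symplectization ends, to show that for any $J$-holomorphic $C$ from $\Theta^+$ to $\Theta^-$ that is \emph{not} a union of covers of product cylinders one has the strict inequality $\mc{A}(\Theta^-) < e^{-1/n}\mc{A}(\Theta^+) = \mc{A}(\phi_{-1/n}(\Theta^+))$.  Combined with clauses (i) and (ii) of the Holomorphic Curves axiom, this yields $\langle\hat\Phi\Theta^+,\phi_{-1/n}(\Theta^+)\rangle=1$ and $\langle\hat\Phi\Theta^+,\Theta^-\rangle=0$ for any $\Theta^-\neq\phi_{-1/n}(\Theta^+)$ of action $\geq e^{-1/n}\mc{A}(\Theta^+)$, while saying nothing about lower-action targets.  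That is exactly enough to conclude that $\hat\Phi$, viewed as a map $ECC_*^{e^{1/n}L}(Y_{1/n},\lambda_{1/n};J_+)\to ECC_*^L(Y_0,\lambda_0;J_-)$ via the $\phi_{-1/n}$-identification of generators, is \emph{triangular with respect to the action ordering, with identity on the diagonal}, hence an isomorphism of chain complexes.  You should replace your confinement claim with this triangularity argument; it is shorter, it is what the Holomorphic Curves axiom actually supports, and it does not require $\phi_{-1/n}$ to be a chain map.  Also note that the side claim in your third step about off-diagonal terms having action ``between $L$ and $e^{1/n}L$'' has the inequality reversed --- the Stokes bound forces any $\Theta^-$ with nonzero coefficient to satisfy $\mc{A}(\Theta^-)\leq e^{-1/n}\mc{A}(\Theta^+)<L$, which is why the image lands in $ECC_*^L(Y_0,\lambda_0)$ in the first place.
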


\begin{proof}
  By Lemma~\ref{lem:ro}, if $n$ is sufficiently large, then the Reeb
  orbits for $\lambda_{1/n}$ of action less than $e^{1/n}L$ correspond
  via $\phi_{-1/n}$ to the Reeb orbits for $\lambda_0$ of action less
  than $L$, and the latter stay outside of the neighborhood $U_{1/n}$
  of $\mc{K}$.  Let $n$ be so large.

  By Proposition~\ref{prop:ls}(c), the flow of the Liouville vector
  field $V$ starting on $Y_{1/n}$ for times in the interval $[-1/n,0]$
  defines an embedding of $[-1/n,0]\times Y_{1/n}$ into $X_{1/n}$.
  Let $X_{1/n}^0$ denote the image of this embedding.  We identify
  $X_{1/n}^0$ with $[0,1/n]\times Y_{1/n}$ such that $Y_{1/n}$ is
  identified with $\{1/n\}\times Y_{1/n}$, and the Liouville vector
  field $V=\partial/\partial s$, where $s$ denotes the $[0,1/n]$
  coordinate.  Then $\{0\}\times Y_{1/n}$ defines a hypersurface in
  $X_{1/n}$ which includes $Y_0\setminus U_{1/n}\subset \partial
  X_{1/n}$, and which also passes into the interior of $X_{1/n}$.  Let
  $X_{1/n}^1$ denote $X_{1/n}\setminus X_{1/n}^0$.  We can now
  decompose the completed cobordism $\overline{X_{1/n}}$ as
\begin{equation}
\label{eqn:X1n}
\overline{X_{1/n}} = ((-\infty,0]\times Y_0) \cup X_{1/n}^1
\cup ([0,\infty)\times Y_{1/n}),
\end{equation}
where $X_{1/n}^0$ corresponds to $[0,1/n]\times Y_{1/n}$ in
\eqref{eqn:X1n}.

We now choose a cobordism-admissible almost complex structure $J$ on
$\overline{X_{1/n}}$ in four steps as follows.  First, let $J_+$ be an
almost complex structure on $\R\times Y_{1/n}$ which is
symplectization-admissible with respect to $\lambda_{1/n}$ and
ECH-generic.  Require $J$ to agree with $J_+$ on $[1/n,\infty)\times
Y_{1/n}$.  Second, extend $J$ over $[0,1/n]\times Y_{1/n}$ by setting
$J=J_+$ on $\Ker(\lambda_{1/n})$, and $J(\partial_s)=f(s)R_{1/n}$,
where $R_{1/n}$ denotes the Reeb vector field associated to
$\lambda_{1/n}$, and $f:[0,1/n]\to\R$ is a positive function which
equals $1$ near $s=1/n$ and which equals $e^{1/n}$ near $s=0$.  Third,
extend $J$ over $(-\infty,0]\times Y_0$ so that it agrees with an
almost complex structure $J_-$ on $\R\times Y_0$ which is
symplectization-admissible for $\lambda_0$ and ECH-generic.  Note that
one can arrange for $J_-$ to be ECH-generic without disturbing the
previous choices because $J_+$ is ECH-generic.  To complete the
construction of $J$, choose an arbitrary $\omega$-compatible extension
of $J$ over $X_{1/n}^1$.

With the above choices, $[0,1/n]\times
(Y_{1/n}\setminus\widetilde{U}_{1/n})$ is a product region in the
sense of Definition~\ref{def:PR}.  In particular, let $\Theta$ be an
ECH generator for $\lambda_{1/n}$ of action less than $e^{1/n}L$.
Since the Reeb orbits in $\Theta$ stay out of the region
$\widetilde{U}_{1/n}$, there is a union of covers of product cylinders
(see Definition~\ref{def:PC}) in $\overline{X_{1/n}}$ from $\Theta$ to
$\phi_{-1/n}(\Theta)$.

We claim that if $C$ is any other $J$-holomorphic curve in
$\overline{X_{1/n}}$ from the above $\Theta$ to an ECH generator
$\Theta'$ for $\lambda_0$, then
\begin{equation}
\label{eqn:actionClaim}
e^{-1/n}\mc{A}(\Theta)>\mc{A}(\Theta'),
\end{equation}
where $\mc{A}$ denotes the symplectic action.  To prove
\eqref{eqn:actionClaim}, observe that the Liouville form $\lambda$ on
$\overline{X_{1/n}}$ agrees with $e^s\lambda_0$ on $(-\infty,0]\times
Y_0$ and agrees with $e^{s-1/n}\lambda_{1/n}$ on $[0,\infty)\times
Y_{1/n}$ in \eqref{eqn:X1n}.  Using Stokes' theorem, we obtain
\[
\begin{split}
e^{-1/n}\mc{A}(\Theta) - \mc{A}(\Theta')  = & \int_{C\cap ([0,\infty)\times
  Y_{1/n})}d\left(e^{-1/n}\lambda_{1/n}\right)\\
&+ \int_{C\cap X_{1/n}^1}d\lambda\\
& + \int_{C\cap ((-\infty,0]\times Y_0)}d\lambda_0.
\end{split}
\]
The first and third integrals on the right are pointwise nonnegative,
and zero only where $C$ is tangent to $\partial_s$.  The second
integral on the right is pointwise positive.  We conclude that
$e^{-1/n}\mc{A}(\Theta)-\mc{A}(\Theta')\ge 0$, with equality if and
only if $C$ is a union of covers of product cylinders.

Consider now the chain map
$\hat{\Phi}^{e^{1/n}L}(X_{1/n},\lambda)$ inducing \eqref{eqn:cob1}
provided by the Holomorphic Curves axiom in Theorem~\ref{thm:cob}.  It
follows from \eqref{eqn:actionClaim} that this chain map is
a composition of chain maps
\[
ECC_*^{e^{1/n}L}(Y_{1/n},\lambda_{1/n};J_+)\longrightarrow
ECC_*^L(Y_0,\lambda_0;J_-) \longrightarrow ECC_*^{e^{1/n}L}(Y_0,\lambda_0;J_-),
\]
where the map on the right is the inclusion, and the map on the left
is triangular with respect to the identification of generators induced
by $\phi_{-1/n}$.  In particular the left map is an isomorphism of
chain complexes, and hence induces an isomorphism on homology.
\end{proof}

To proceed, we now define quantitative measures of the failure of the
cobordism map \eqref{eqn:surcob} to be an isomorphism.

\begin{definition}
\label{def:AB}
\begin{description}
\item{(a)}
Define $A$ to be the infimum of the set of real numbers $L$ such that 
the image of the inclusion-induced map
\[
\imath^L: H_*^L(Y_0,\lambda_0) \longrightarrow H_*(Y_0,\lambda_0)
\]
is not contained in the image of the cobordism map \eqref{eqn:surcob}.
\item{(b)}
Define $B$ to be the infimum of the set of real numbers $L$ such that
the kernel of the cobordism map
\[
\Phi^L(X,\lambda):
H_*^L(Y_1,\lambda_1) \longrightarrow H_*^L(Y_0,\lambda_0)
\]
is not contained in the kernel of the inclusion-induced map
\[
\imath^L:
H_*^L(Y_1,\lambda_1) \longrightarrow H_*(Y_1,\lambda_1).
\]
\end{description}
\end{definition}

\begin{lemma}
\label{lem:AB}
\begin{description}
\item{(a)} If \eqref{eqn:surcob} is not surjective, then $A<\infty$.
\item{(b)} If \eqref{eqn:surcob} is not injective, then $B<\infty$.
\end{description}
\end{lemma}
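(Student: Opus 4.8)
The plan is to derive both statements as formal consequences of the identification of ECH with the direct limit of its filtered versions, equation \eqref{eqn:edr}, together with the Inclusion and Direct Limit axioms of Theorem~\ref{thm:cob}; no further geometric input is needed. Throughout I would use two elementary facts about a direct limit $M=\lim_{L\to\infty}M^L$ over the directed set of real numbers $L$ with connecting maps $\imath^{L,L'}$ as in \eqref{eqn:istar}: (i) every element of $M$ is of the form $\imath^L(x)$ for some real number $L$ and some $x\in M^L$, where $\imath^L$ is the canonical map to the limit as in \eqref{eqn:iL}; and (ii) an element $x\in M^L$ satisfies $\imath^L(x)=0$ in $M$ if and only if $\imath^{L,L'}(x)=0$ in $M^{L'}$ for some $L'\ge L$. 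In the present setting $M=H_*(Y_\pm,\lambda_\pm)$ and $M^L=H_*^L(Y_\pm,\lambda_\pm)$.

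For part (a), I would produce a single $L$ witnessing the definition of $A$. Assuming \eqref{eqn:surcob} is not surjective, pick $\alpha\in H_*(Y_0,\lambda_0)$ with $\alpha\notin\op{im}(\Phi(X))$. By fact (i) there is a real number $L$ and a class $\alpha_L\in H_*^L(Y_0,\lambda_0)$ with $\imath^L(\alpha_L)=\alpha$. Then $\alpha$ lies in the image of $\imath^L\colon H_*^L(Y_0,\lambda_0)\to H_*(Y_0,\lambda_0)$ but not in the image of \eqref{eqn:surcob}, so this $L$ belongs to the set in Definition~\ref{def:AB}(a), and hence $A\le L<\infty$.

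For part (b), the plan is similar but needs one extra bookkeeping step. Assuming \eqref{eqn:surcob} is not injective, pick $\beta\in H_*(Y_1,\lambda_1)$ with $\beta\ne 0$ and $\Phi(X)(\beta)=0$, and by fact (i) write $\beta=\imath^{L_0}(\beta_0)$ with $\beta_0\in H_*^{L_0}(Y_1,\lambda_1)$ and $\imath^{L_0}(\beta_0)\ne 0$. Passing to the direct limit over $L'$ in the Inclusion-axiom square relating levels $L_0$ and $L'$ for $Y_-=Y_0$, and using the Direct Limit axiom, yields the commuting relation $\Phi(X)\circ\imath^{L_0}=\imath^{L_0}\circ\Phi^{L_0}(X,\lambda)$, whence $\imath^{L_0}\bigl(\Phi^{L_0}(X,\lambda)(\beta_0)\bigr)=\Phi(X)(\beta)=0$ in $H_*(Y_0,\lambda_0)$. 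By fact (ii) there is $L_1\ge L_0$ with $\imath^{L_0,L_1}\bigl(\Phi^{L_0}(X,\lambda)(\beta_0)\bigr)=0$ in $H_*^{L_1}(Y_0,\lambda_0)$. Setting $\beta_1\eqdef\imath^{L_0,L_1}(\beta_0)\in H_*^{L_1}(Y_1,\lambda_1)$, the Inclusion axiom gives $\Phi^{L_1}(X,\lambda)(\beta_1)=\imath^{L_0,L_1}\bigl(\Phi^{L_0}(X,\lambda)(\beta_0)\bigr)=0$, so $\beta_1\in\ker\Phi^{L_1}(X,\lambda)$, while $\imath^{L_1}(\beta_1)=\imath^{L_0}(\beta_0)=\beta\ne 0$, so $\beta_1\notin\ker\imath^{L_1}$. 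Hence this $L_1$ belongs to the set in Definition~\ref{def:AB}(b), and $B\le L_1<\infty$.

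The only point requiring care — and really the only nontrivial content — is that extra step in part (b): the hypothesis $\Phi(X)(\beta)=0$ does not directly produce a class in $H_*^{L_0}(Y_1,\lambda_1)$ annihilated by $\Phi^{L_0}(X,\lambda)$ itself, but only says that $\Phi^{L_0}(X,\lambda)(\beta_0)$ dies in the direct limit. So one must first raise the action level from $L_0$ to some $L_1$ at which $\Phi^{L_0}(X,\lambda)(\beta_0)$ genuinely vanishes, and then verify via the Inclusion axiom that the pushed-forward class $\beta_1$ simultaneously lies in $\ker\Phi^{L_1}(X,\lambda)$ and has nonzero image $\beta$ under $\imath^{L_1}$. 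Everything else is a formal manipulation of the axioms of Theorem~\ref{thm:cob} and the standard properties of direct limits.
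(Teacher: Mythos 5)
Your proof is correct and takes essentially the same approach as the paper's: part (a) is literally the paper's argument, and part (b) is the paper's argument recast at the level of homology classes rather than explicit cycles and boundaries. The only cosmetic difference is that the paper represents $\beta$ by a chain $\zeta$ of action $<L$ whose image under the cobordism chain map is a boundary of action $<L'$, and then takes $L'\ge L$, whereas you package the same bookkeeping in terms of the standard direct-limit facts (i) and (ii); your version sidesteps the paper's slightly informal reference to ``the cobordism chain map'' (which, via the Holomorphic Curves axiom, is only defined one filtration level at a time), but the underlying idea --- raise the filtration level until $\Phi^{L_0}(X,\lambda)(\beta_0)$ genuinely dies, then observe that the pushed-forward class still has nonzero image in the limit --- is identical.
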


\begin{proof}
(a) If \eqref{eqn:surcob} is not surjective then there exists an
element of $H_*(Y_0,\lambda_0)$ which is not in the image; and by
\eqref{eqn:edr}, any given element of $H_*(Y_0,\lambda_0)$ comes from
$H_*^L(Y_0,\lambda_0)$ for some $L$.

(b) If \eqref{eqn:surcob} is not injective, then there exists a
nonzero element $H_*(Y_1,\lambda_1)$ which maps to zero in
$H_*(Y_0,\lambda_0)$.  We can represent the former by a chain $\zeta$
of action less than some $L$, and its image under the cobordism chain
map is the boundary of a chain with action less than some $L'$.  We may
assume that $L'\ge L$.  It then follows from the Inclusion axiom in
Theorem~\ref{thm:cob} that $\imath^{L,L'}[\zeta]$ is in the kernel of
the cobordism map $\Phi^{L'}(X,\lambda)$.  But $\imath^{L,L'}[\zeta]$
is not in the kernel of the inclusion-induced map $\imath^{L'}$,
because $\imath^{L'}\imath^{L,L'}[\zeta]=\imath^L[\zeta]\neq 0$ in
$H_*(Y_1,\lambda_1)$.  Thus $B\le L'$.
\end{proof}

In view of Lemma~\ref{lem:ni} and Definition~\ref{def:PhiX}, the chord
conjecture in the nondegenerate case now follows from:

\begin{theorem}
\label{thm:nondegenerate}
Let $Y_0$ be a closed oriented 3-manifold with a nondegenerate contact
form $\lambda_0$, and let $\mc{K}$ be a Legendrian knot in $(Y_0,\lambda_0)$.
\begin{description}
\item{(a)} If \eqref{eqn:surcob} is not surjective, then $\mc{K}$ has a
  Reeb chord of action $\le A$.
\item{(b)} If \eqref{eqn:surcob} is not injective, then $\mc{K}$ has a
  Reeb chord of action $\le B$.
\end{description}
\end{theorem}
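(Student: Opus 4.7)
The plan is to prove both parts by contradiction, working with the decomposition $X = X_{1/n} \circ X_n'$ from Proposition~\ref{prop:ls}, where $X_n' \subset X$ denotes the sub-cobordism from $(Y_1, \lambda_1)$ to $(Y_{1/n}, \lambda_{1/n})$ lying above the hypersurface $Y_{1/n}$. Suppose $\mc{K}$ has no Reeb chord of action $\le A$ (resp.\ $\le B$). Since Reeb chords of bounded action form a compact set by an Arzel\`a--Ascoli argument, one obtains $\epsilon > 0$ such that $\mc{K}$ has no chord of action $\le A + \epsilon$ (resp.\ $\le B + \epsilon$). The defining sets of $A$ and $B$ in Definition~\ref{def:AB} are upward-closed in $L$, so combining this with the discreteness of the Reeb orbit actions of $\lambda_0$, one picks $L^* \in (A, A + \epsilon)$ (resp.\ $(B, B + \epsilon)$) that witnesses the non-containment in the relevant definition and is not itself a Reeb orbit action of $\lambda_0$.

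Two observations drive the argument. \emph{First}, by Lemma~\ref{lem:main}, for all sufficiently large $n$, $\Phi^{e^{1/n}L^*}(X_{1/n}, \lambda) = \imath^{L^*, e^{1/n}L^*} \circ \psi_n$ with $\psi_n$ an isomorphism. Because $L^*$ is generic, for $n$ large enough the interval $(L^*, e^{1/n}L^*]$ contains no Reeb orbit action of $\lambda_0$, so $\imath^{L^*, e^{1/n}L^*}$ is also an isomorphism, and hence so is $\Phi^{e^{1/n}L^*}(X_{1/n}, \lambda)$. \emph{Second}, since $Y_1$ and $Y_{1/n}$ are hypersurfaces in $X$ joined by the flow of the Liouville vector field, the sub-cobordism $X_n'$ is smoothly a product cobordism; the Seiberg--Witten cobordism map of a smooth product cobordism is the identity, so by Definition~\ref{def:PhiX}, the map $\Phi(X_n'): H_*(Y_1, \lambda_1) \to H_*(Y_{1/n}, \lambda_{1/n})$ is an isomorphism on (unfiltered) ECH.

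For (a), given $x \in H_*^{L^*}(Y_0, \lambda_0)$ with $\imath^{L^*}(x)$ not in the image of $\Phi(X)$, set $y := \psi_n^{-1}(x)$; the Inclusion and Direct Limit axioms give $\Phi(X_{1/n})(\imath^{e^{1/n}L^*}(y)) = \imath^{L^*}(x)$. By surjectivity of $\Phi(X_n')$, choose $z \in H_*(Y_1, \lambda_1)$ with $\Phi(X_n')(z) = \imath^{e^{1/n}L^*}(y)$; the Composition axiom then gives $\Phi(X)(z) = \imath^{L^*}(x)$, contradicting the choice of $x$. For (b), given $\zeta \in \ker \Phi^{L^*}(X, \lambda)$ with $\imath^{L^*}(\zeta) \neq 0$, set $\zeta_n := \imath^{L^*, e^{1/n}L^*}(\zeta)$; the Inclusion axiom gives $\Phi^{e^{1/n}L^*}(X, \lambda)(\zeta_n) = 0$. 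Composition together with the isomorphism property of $\Phi^{e^{1/n}L^*}(X_{1/n}, \lambda)$ from the first observation then forces $\Phi^{e^{1/n}L^*}(X_n', \lambda)(\zeta_n) = 0$; Inclusion plus Direct Limit yields $\Phi(X_n')(\imath^{L^*}(\zeta)) = 0$, and the injectivity of $\Phi(X_n')$ forces $\imath^{L^*}(\zeta) = 0$, again a contradiction.

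The main technical obstacle will be verifying the second observation: that $X_n'$ is smoothly a product cobordism so that its Seiberg--Witten cobordism map is an isomorphism. This entails unwinding the Legendrian surgery construction in Section~\ref{sec:ls} to see that the Liouville flow identifies $X_n'$ with $[0,1] \times Y_1$ as a smooth four-manifold (one must in particular check that the Liouville flow from $Y_{1/n}$ reaches $Y_1$ smoothly, despite the Liouville vector field's zero at the origin of the handle), after which the standard fact that smooth product cobordisms induce isomorphisms on $\widehat{HM}^*$ finishes the job.
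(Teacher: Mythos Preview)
Your proposal is correct and follows essentially the same route as the paper's proof: both reduce to Lemma~\ref{lem:main}, the fact that the sub-cobordism $X_n' = \overline{X \setminus X_{1/n}}$ is a smooth product (hence induces an isomorphism on $\widehat{HM}^*$), and a diagram chase using the Inclusion, Direct Limit, and Composition axioms together with a compactness argument for Reeb chords. Your device of choosing $L^*$ off the action spectrum so that $\Phi^{e^{1/n}L^*}(X_{1/n},\lambda)$ becomes an outright isomorphism is a mild streamlining of part~(b) compared with the paper's shifted-$L$ direct-limit argument, and your flagged obstacle is harmless: the unique zero of $V$ is the origin of the handle, which lies inside the taller, thinner handle and hence in $X_{1/n}$, so $V$ is nowhere zero on $X_n'$ and the product structure follows immediately.
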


\begin{proof}
  To prove part (a), it is enough to show that given $L>0$, if there
  is no Reeb chord of action $\le L$, then $A\ge L$.  (Because then if $A$
  is finite, then there exists a Reeb chord of action $\le A+1/n$ for every
  positive integer $n$, so a compactness argument shows that there
  exists a Reeb chord of action $\le A$.)  To show that $A\ge L$, it is
  enough to show that the image of the inclusion-induced map
\[
\imath^L: H_*^{L}(Y_0,\lambda_0) \longrightarrow H_*(Y_0,\lambda_0)
\]
is contained in the image of the cobordism map \eqref{eqn:surcob}.
By the Inclusion and Direct Limit axioms in Theorem~\ref{thm:cob} we
have a commutative diagram
\[
\begin{CD}
H_*^{e^{1/n}L}(Y_{1/n},\lambda_{1/n})
@>{\Phi^{e^{1/n}L}(X_{1/n},\lambda)}>> H_*^{e^{1/n}L}(Y_0,\lambda_0) \\
@VV{\imath^{e^{1/n}L}}V @VV{\imath^{e^{1/n}L}}V \\
H_*(Y_{1/n},\lambda_{1/n})
@>{\Phi(X_{1/n})}>> H_*(Y_0,\lambda_0)
\end{CD}
\]
If $n$ is sufficiently large as in Lemma~\ref{lem:main}, then it
follows that we have a commutative diagram
\[
\begin{CD}
  & & H_*^{e^{1/n}L}(Y_{1/n},\lambda_{1/n}) @>{\simeq}>> H_*^L(Y_0,\lambda_0) \\
  & & @VV{\imath^{e^{1/n}L}}V @VV{\imath^L}V \\
  H_*(Y_1,\lambda_1) @>{\simeq}>> H_*(Y_{1/n},\lambda_{1/n}) @>{\Phi(X_{1/n})}>>
  H_*(Y_0,\lambda_0).
\end{CD}
\]
Here the lower left arrow is induced by the cobordism $\overline{X
  \setminus X_{1/n}}$ from $Y_1$ to $Y_{1/n}$; this map is an
isomorphism because the cobordism $\overline{X\setminus X_{1/n}}$ is
diffeomorphic to the product $[0,1]\times Y_1$, and product cobordisms
induce isomorphisms on Seiberg-Witten Floer cohomology.  In addition,
the composition of the lower two arrows is the cobordism map
\eqref{eqn:surcob}, by the Composition axiom in Theorem~\ref{thm:cob}
(or by the composition property for $\widehat{HM}^*$).
The statement we need to prove now follows by chasing the diagram.

The proof of part (b) is similar to the proof of part (a). It is
enough to show that if there is no Reeb chord of action $\le L$, then the
kernel of the cobordism map $H_*^L(Y_1,\lambda_1)\to
H_*^L(Y_0,\lambda_0)$ is contained in the kernel of the
inclusion-induced map $H_*^L(Y_1,\lambda_1)\to H_*(Y_1,\lambda_1)$.
To do so, let $n$ be sufficiently large as in Lemma~\ref{lem:main}
(with $L$ replaced by $e^{-1/n}L$).  Then by the Inclusion and Direct
Limit axioms in Theorem~\ref{thm:cob}, we have a commutative diagram
\[
\begin{CD}
H_*^L(Y_1,\lambda_1) @>{\Phi^L(\overline{X\setminus X_{1/n}} ,
  \,\lambda)}>> H_*^L(Y_{1/n},\lambda_{1/n}) @>{\simeq}>>
H_*^{e^{-1/n}L}(Y_0,\lambda_0) \\
@VV{\imath^L}V @VV{\imath^L}V @VV{\imath^{e^{-1/n}L,L}}V\\
H_*(Y_1,\lambda_1) @>{\simeq}>> H_*(Y_{1/n},\lambda_{1/n}) & &
H_*^L(Y_0,\lambda_0),
\end{CD}
\]
where the composition of the two rightmost arrows is
$\Phi^L(X_{1/n},\lambda)$.  By the Composition axiom in
Theorem~\ref{thm:cob}, the composition of the three arrows from
$H_*^L(Y_1,\lambda_1)$ to $H_*^L(Y_0,\lambda_0)$ is
$\Phi^L(X,\lambda)$.  Now suppose $\zeta\in H_*^L(Y_1,\lambda_1)$ maps
to zero in $H_*^L(Y_0,\lambda_0)$.  Since the latter is the direct
limit of $H_*^{e^{-1/n}L}(Y_0,\lambda_0)$ as $n\to \infty$, it follows
that if $n$ is chosen sufficiently large then $\zeta$ maps to zero in
$H_*^{e^{-1/n}L}(Y_0,\lambda_0)$.  Chasing the diagram then shows that
$\zeta$ maps to zero in $H_*(Y_1,\lambda_1)$, as required.
\end{proof}

\section{The chord conjecture: degenerate case}
\label{sec:ccdegenerate}

We now use Theorem~\ref{thm:nondegenerate} for the nondegenerate case
to deduce the chord conjecture when $\lambda_0$ is degenerate.

When $\lambda_0$ is degenerate, one can repeat the surgery
construction from \S\ref{sec:ls}, to obtain an exact symplectic
cobordism $(X,\lambda)$ as before, now with degenerate contact forms
$\lambda_{1/n}$ on the hypersurfaces $Y_{1/n}$.  However by
\eqref{eqn:et} one can perturb these hypersurfaces slightly, as well
as the boundary hypersurfaces $Y_0$ and $Y_1$, in the completion
$\overline{X}$, so as to make the induced contact forms on them
nondegenerate.  In particular, for each positive integer $k$ we can
find functions $f_{1/n}^{(k)}$ on $Y_{1/n}$ and $f_0^{(k)}$ on $Y_0$
with
\[
\|f_{1/n}^{(k)}\|_{C^1},\; \|f_0^{(k)}\|_{C^1} <1/k
\]
such that we have an exact cobordism $X^{(k)}$ as in \S\ref{sec:ls},
contained in $\overline{X}$ with the same Liouville form $\lambda$
(which we henceforth omit from the notation), with nondegenerate
contact forms $\lambda_{1/n}^{(k)}=e^{f_{1/n}^{(k)}}\lambda_{1/n}$ on
$Y_{1/n}$ and $\lambda_0^{(k)}=e^{f_0^{(k)}}\lambda_0$ on $Y_0$.  We
can also assume that
\begin{equation}
\label{eqn:convenient}
f_0^{(1)}>f_0^{(k)}, \quad\quad f_1^{(1)}>f_1^{(k)},
\end{equation}
which will be convenient below.

Now let $A(k)$ and $B(k)$ denote the upper bounds on the action of a
Reeb chord coming from the cobordism $X^{(k)}$ from
$(Y_1,\lambda_{1}^{(k)})$ to $(Y_0,\lambda_{0}^{(k)})$.  By
Theorem~\ref{thm:nondegenerate} and a compactness argument for Reeb
chords explained at the end of this section, to prove the chord
conjecture for $\lambda_0$ it is enough to show that $A(k)$ and $B(k)$
stay bounded as $k\to\infty$.  In fact we have:

\begin{proposition}
\label{prop:continuity}
$A(k)\le A(1)$ and $B(k)\le B(1)$.
\end{proposition}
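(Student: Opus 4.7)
The plan is to fit $X^{(1)}$ and $X^{(k)}$ into a common exact symplectic cobordism inside $\overline{X}$ admitting two natural decompositions, so that the Composition axiom of Theorem~\ref{thm:cob} yields an identity relating the filtered maps $\Phi^L(X^{(1)})$ and $\Phi^L(X^{(k)})$. Concretely, because $f_0^{(1)} > f_0^{(k)}$ the perturbed hypersurface $Y_0^{(1)}$ lies above $Y_0^{(k)}$ along the Liouville flow in $\overline{X}$, and the region $W_0$ between them is an exact symplectic cobordism from $(Y_0,\lambda_0^{(1)})$ to $(Y_0,\lambda_0^{(k)})$ whose underlying smooth 4-manifold is diffeomorphic to $[0,1]\times Y_0$. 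Similarly $f_1^{(1)} > f_1^{(k)}$ yields an exact cobordism $W_1$ from $(Y_1,\lambda_1^{(1)})$ to $(Y_1,\lambda_1^{(k)})$ of smooth type $[0,1]\times Y_1$. Taking $\widetilde{X}$ to be the region in $\overline{X)}$ bounded below by $Y_0^{(k)}$ and above by $Y_1^{(1)}$, we have $\widetilde{X} = W_0 \circ X^{(1)} = X^{(k)} \circ W_1$ as exact symplectic cobordisms. The Composition axiom, valid because the intermediate contact forms $\lambda_0^{(1)}$ and $\lambda_1^{(k)}$ are nondegenerate, then gives the key identity
\[
\Phi^L(W_0) \circ \Phi^L(X^{(1)}) = \Phi^L(X^{(k)}) \circ \Phi^L(W_1).
\]

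The second input is that the unfiltered maps $\Phi(W_0)$ and $\Phi(W_1)$ are isomorphisms. This is because, under Definition~\ref{def:PhiX}, they are identified with Seiberg--Witten cobordism maps for product smooth 4-manifolds $[0,1]\times Y_i$, and product cobordisms induce canonical isomorphisms on $\widehat{HM}^\ast$. Granted these two inputs, part~(a) follows by a short diagram chase: for $L > A(1)$, pick $\tilde{x}\in H_*^L(Y_0,\lambda_0^{(1)})$ with $\imath^L(\tilde{x}) \notin \op{Im}(\Phi(X^{(1)}))$, and set $\tilde{y} := \Phi^L(W_0)(\tilde{x}) \in H_*^L(Y_0,\lambda_0^{(k)})$. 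The Inclusion and Direct Limit axioms give $\imath^L(\tilde{y}) = \Phi(W_0)(\imath^L(\tilde{x}))$, and the key identity on unfiltered ECH combined with $\Phi(W_1)$ being surjective yields $\op{Im}(\Phi(X^{(k)})) = \Phi(W_0)(\op{Im}(\Phi(X^{(1)})))$; injectivity of $\Phi(W_0)$ then forces $\imath^L(\tilde{y}) \notin \op{Im}(\Phi(X^{(k)}))$, proving $A(k)\le L$ and hence $A(k)\le A(1)$. Part~(b) is analogous: start from $\tilde{x}\in\Ker(\Phi^L(X^{(1)}))\setminus\Ker(\imath^L)$ and set $\tilde{y} := \Phi^L(W_1)(\tilde{x})$; the key identity forces $\tilde{y}\in\Ker(\Phi^L(X^{(k)}))$, while $\imath^L(\tilde{y}) = \Phi(W_1)(\imath^L(\tilde{x}))\neq 0$ because $\Phi(W_1)$ is an isomorphism.

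The main obstacle I anticipate is the second input: verifying that $\Phi(W_0)$ and $\Phi(W_1)$ are isomorphisms on unfiltered ECH. This rests on the product smooth structure of $W_0$ and $W_1$, which follows from their construction as regions swept out by the Liouville flow between two hypersurfaces transverse to it, together with the standard fact that product smooth cobordisms induce canonical isomorphisms on Seiberg--Witten Floer cohomology (they are invertible by reversing the product). Everything else is a formal consequence of the axioms of Theorem~\ref{thm:cob}.
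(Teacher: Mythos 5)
Your proof is correct and takes essentially the same approach as the paper: the paper decomposes the large cobordism into three pieces $X_-^{(k)}\circ X_0^{(k)}\circ X_+^{(k)}$ (your $W_0$, a middle piece, and $W_1$) and writes the resulting commutative diagram once, which encodes exactly your two decompositions $W_0\circ X^{(1)}=\widetilde X=X^{(k)}\circ W_1$, then invokes the Inclusion, Direct Limit, and Composition axioms and the fact that product cobordisms induce isomorphisms on $\widehat{HM}^*$, just as you do. The only difference is expository: you spell out the diagram chases, where the paper says they follow immediately.
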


\begin{proof}
  By the first part of \eqref{eqn:convenient}, there is a subset
  $X_+^{(k)}$ of $\overline{X}$, diffeomorphic to $[0,1]\times Y_1$,
  which defines an exact symplectic cobordism from
  $(Y_1,\lambda_1^{(1)})$ to $(Y_1,\lambda_1^{(k)})$.  Likewise, by
  the second part of \eqref{eqn:convenient}, there is a subset
  $X_-^{(k)}$ of $\overline{X}$, diffeomorphic to $[0,1]\times Y_0$,
  which is an exact symplectic cobordism from $(Y_0,\lambda_0^{(1)})$
  to $(Y_0,\lambda_0^{(k)})$.  Let $X_0^{(k)}$ denote compact subset
  of $\overline{X}$ bounded by the negative boundary of $X_+^{(k)}$
  and the positive boundary of $X_-^{(k)}$.  This is an exact
  symplectic cobordism from $(Y_1,\lambda_1^{(k)})$ to
  $(Y_0,\lambda_0^{(1)})$, and we have the compositions
  $X_0^{(k)}\circ X_+^{(k)} = X^{(1)}$ and $X_-^{(k)}\circ
  X_0^{(k)} = X^{(k)}$.

Now fix $L$ and consider the diagram
\[
\begin{CD}
H_*^L(Y_1,\lambda_1^{(1)}) @>{\Phi^L(X_+^{(k)})}>>
H_*^L(Y_1,\lambda_1^{(k)}) @>{\Phi^L(X_0^{(k)})}>>
H_*^L(Y_0,\lambda_0^{(1)}) @>{\Phi^L(X_-^{(k)})}>>
H_*^L(Y_0,\lambda_0^{(k)}) \\
@VV{\imath^L}V
@VV{\imath^L}V
@VV{\imath^L}V
@VV{\imath^L}V
\\
H_*(Y_1,\lambda_1^{(1)}) @>{\Phi(X_+^{(k)})}>{\simeq}>
H_*(Y_1,\lambda_1^{(k)}) @>{\Phi(X_0^{(k)})}>>
H_*(Y_0,\lambda_0^{(1)}) @>{\Phi(X_-^{(k)})}>{\simeq}>
H_*(Y_0,\lambda_0^{(k)}).
\end{CD}
\]
This diagram commutes by the Inclusion axiom, and the maps
$\Phi(X_\pm^{(k)})$ are isomorphisms because the cobordisms
$X_\pm^{(k)}$ are diffeomorphic to products, which induce isomorphisms
on Seiberg-Witten Floer cohomology.  By the Composition axiom, the
composition of the two upper left horizontal arrows is
$\Phi^L(X^{(1)})$, the composition of the two upper right horizontal
arrows is $\Phi^L(X^{(k)})$, the composition of the two lower left
horizontal arrows is $\Phi(X^{(1)})$, and the composition of the two
lower right horizontal arrows is $\Phi(X^{(k)})$.

To prove that $A(k)\le A(1)$, it is enough to show that if the image of
$\imath^L:H_*^L(Y_0,\lambda_0^{(k)})\to H_*(Y_0,\lambda_0^{(k)})$ is
contained in the image of $\Phi(X^{(k)})$, then the image of
$\imath^L: H_*^L(Y_0,\lambda_0^{(1)})\to H_*(Y_0,\lambda_0^{(1)})$ is
contained in the image of $\Phi(X^{(1)})$.  This follows immediately
by chasing the above diagram.

To prove that $B(k)\le B(1)$, it is enough to show that if the kernel
of $\Phi^L(X^{(k)})$ is contained in the kernel of
$\imath^L:H_*^L(Y_1,\lambda_1^{(k)})\to H_*(Y_1,\lambda_1^{(k)})$,
then the kernel of $\Phi^L(X^{(1)})$ is contained in the kernel of
$\imath^L:H_*^L(Y_1,\lambda_1^{(1)})\to H_*(Y_1,\lambda_1^{(1)})$.
This also follows immediately from the above diagram.
\end{proof}

To conclude, we have:

\begin{proof}[Proof of Theorem~\ref{thm:cc}.]
Let $Y_0$ be a closed oriented 3-manifold with a contact form
$\lambda_0$, and let $\mc{K}$ be a Legendrian knot in
$(Y_0,\lambda_0)$.

If $\lambda_0$ is nondegenerate, then it follows
from Lemma~\ref{lem:ni} and Definition~\ref{def:PhiX} that the map
\eqref{eqn:surcob} is not an isomorphism.  Let $A,B\in[0,\infty]$
be the numbers in Definition~\ref{def:AB}. By Lemma~\ref{lem:AB} we have
$\min(A,B)<\infty$, and by Theorem~\ref{thm:nondegenerate} the knot $\mc{K}$
has a Reeb chord of length at most $\min(A,B)$.

If $\lambda_0$ is degenerate, let $\{\lambda_0^{(k)}\}_{k=1,2,\ldots}$
be a sequence of nondegenerate perturbations of $\lambda_0$ as
described at the beginning of this section, and let
$A(k),B(k)$ denote the corresponding quantities from
Definition~\ref{def:AB}.  By the nondegenerate case, $\mc{K}$ has a
Reeb chord $\gamma_k$ for $\lambda_0^{(k)}$ of length at most
$\min(A(k),B(k))<\infty$.  By Proposition~\ref{prop:continuity}, the
length of $\gamma_k$ has a $k$-independent upper bound.  Thus we can
pass to a subsequence such that the lengths of the Reeb chords
$\gamma_k$ converge to a real number $L$.  We can also pass to a
subsequence so that the starting and ending points of the Reeb chords
$\gamma_k$ converge to points $y_0,y_L\in\mc{K}$.  Now as
$k\to\infty$, the $1$-form $\lambda_0^{(k)}$ converges to $\lambda_0$
in $C^1$, and so the Reeb vector field for $\lambda_0^{(k)}$ converges
to the Reeb vector field for $\lambda_0$ in $C^0$.  Consequently there
is a Reeb chord for $\lambda_0$ of length $L$ from $y_0$ to $y_L$.
\end{proof}

\end{document}